\title{A perfectly matched layer approach for radiative transfer in highly scattering regimes
\thanks{Submitted to the editors \today.}}
\author{Herbert Egger%
\thanks{Numerical Analysis and Scientific Computing, Department of Mathematics, TU Darmstadt, Dolivostr. 15, 64293 Darmstadt, Germany. 
\email{egger@mathematik.tu-darmstadt.de}}
\and Matthias Schlottbom% 
\thanks{Department of Applied Mathematics, University of Twente,
P.O. Box 217, 7500 AE Enschede, The Netherlands.
\email{m.schlottbom@utwente.nl}}
}
\newcommand{\TheTitle}{A perfectly matched layer approach}
\newcommand{\TheAuthors}{H. Egger, M. Schlottbom}
\headers{\TheTitle}{\TheAuthors}
\def\d{\,d}
\def\s{s}
\def\r{r}
\def\k{k}
\def\n{n}
\def\sn{s \cdot \n}
\def\sgrad{s \cdot \nabla}
\def\u{u}
\def\v{v}
\def\w{w}
\def\z{z}
\def\K{K}
\def\C{\mathcal{C}}
\def\Ci{\C^{-1}}
\def\H{H}
\def\q{q}
\def\wu{\widetilde\u}
\def\wz{\widetilde\z}
\def\ula{u^{\ell,\a}}
\def\wla{w^{\ell,\a}}
\def\R{\mathcal{R}}
\def\dR{\partial\mathcal{R}}
\def\S{\mathcal{S}}
\def\D{\mathcal{D}}
\def\P{\mathcal{P}}
\def\T{\mathcal{T}}
\def\W{\mathcal{W}}
\def\SS{\mathbb{S}}
\def\XX{\mathbb{X}}
\def\a{a}
\def\RR{\mathbb{R}}
\def\WW{\mathbb{W}}
\def\VV{\mathbb{V}}
\def\ttM{\mathtt{M}} 
\def\ttS{\mathtt{S}} 
\def\ttR{\mathtt{R}} 
\def\ttB{\mathtt{B}}
\def\ttC{\mathtt{C}}
\def\ttup{\mathtt{w}^+} 
\def\ttum{\mathtt{w}^-} 
\def\ttqp{\mathtt{q}^+} 
\def\ttqm{\mathtt{q}^-} 
\newtheorem{problem}[theorem]{Problem}
\newtheorem{remark}[theorem]{Remark}
\newcommand{\tnorm}[1]{{\left\vert\kern-0.25ex\left\vert\kern-0.25ex\left\vert #1 
    \right\vert\kern-0.25ex\right\vert\kern-0.25ex\right\vert}}
\newenvironment{myass}
               {\list{}{\listparindent 0.5ex%
                        \itemindent    \listparindent
%                         \parsep        5.5ex
                        \topsep        1.5ex}%
               }
               {\endlist}
\begin{document}
\maketitle
\begin{abstract}
We consider the numerical approximation of boundary conditions in radiative transfer problems by a perfectly matched layer approach.
The main idea is to extend the computational domain by an absorbing layer and to use an appropriate reflection 
boundary condition at the boundary of the extended domain. A careful analysis shows that the consistency error 
introduced by this approach can be made arbitrarily small by increasing the size of the extension domain or the 
magnitude of the artificial absorption in the surrounding layer.
A particular choice of the reflection boundary condition allows us to circumvent the half-space 
integrals that arise in the variational treatment of the original vacuum boundary conditions and which destroy 
the sparse coupling observed in numerical approximation schemes based on truncated spherical harmonics expansions. 
A combination of the perfectly matched layer approach with a mixed variational formulation and a PN-finite element approximation 
leads to discretization schemes with optimal sparsity pattern and provable quasi-optimal convergence properties. 
As demonstrated in numerical tests these methods are accurate and very efficient for radiative transfer in the scattering regime. 
\end{abstract}
\begin{keywords}
Radiative transfer,
Galerkin approximation,
$P_N$ method,
perfectly matched layers
\end{keywords}
\begin{AMS}
65N12, % Stability and convergence of numerical methods
65N15, % Error bounds
65N30, % Finite elements, Rayleigh-Ritz and Galerkin methods, finite methods
65N35  % Spectral, collocation and related methods
\end{AMS}
\section{Introduction}
Radiative transfer problems arise in a variety of applications, such as astrophysics, meteorology, 
nuclear reactor physics, or medical treatment and imaging; we refer to \cite{Arridge99,CaseZweifel67,Chandrasekhar60,DuderstadtMartin79,Modest,TervoKolmonen2002} for examples and further references.
In this paper, we consider a particular aspect of such models, namely the efficient numerical treatment of boundary conditions. 
For ease of presentation, we consider a mono-chromatic and stationary model problem
\begin{alignat}{2}
\sgrad \u(\r,\s) + \mu(\r) \u(\r,\s) 
   &= \int_\S k(\r,\s\cdot \s')\u(\r,\s')d\s' + \q(\r,\s)  
   \quad \text{in } \R \times \S, \label{eq:rte1} \\
\intertext{together with vacuum (homogeneous inflow) boundary conditions}
             \u(\r,\s) &= 0          
   \qquad \qquad \quad \text{on } \dR \times \S \text{ with } \s \cdot \n(\r)<0. \label{eq:rte2}
\end{alignat}
Here, $\n(\r)$ is the outer unit normal vector on $\partial\R$. 
This model describes the transport, absorption, and scattering of particles 
propagating through a bounded domain $\R$ which is filled by some background medium and surrounded by vacuum. 
$\u=\u(\r,\s)$ denotes the density of particles at position $\r \in \R$ 
traveling in direction $\s \in \S$, the coefficients $\mu(\r)$ and $\k(\r,\s \cdot \s')$ describe 
the attenuation and scattering properties of the medium and $q(\r,\s)$ is a given source density. 
Due to the inherent tensor product structure of the phase space $\R \times \S$, it seems natural to expand
the density $\u(\r,\s)$ into a series 
\begin{align} \label{eq:fourier}
\u(\r,\s) = \sum\nolimits_n \u_n(\r) \H_n(\s),
\end{align}
which allows to formally recast the radiative transfer equation \eqref{eq:rte1} as an infinite system of coupled 
partial differential equations for the moments $u_n(\r)$. 
A particularly well-suited choice for the basis functions $\H_n$ are the spherical harmonics,
since they form a complete orthogonal system in $L^2(\S)$ corresponding to the eigenfunctions of the scattering operator.
Moreover, the product $\s \H_n(\s)$ can be expressed as a finite linear combination of spherical harmonics $\H_m$, which 
leads to a sparse coupling of the moment equations arising from the spherical harmonics expansion; 
let us refer to \cite{Arridge99,MarchukLebedev86} for details.
In the highly scattering regime, the density $\u(\r,\s)$ is a smooth function of $\s$, which results 
in a fast decay of the moments $\u_n(\r)$ in the spherical harmonics expansion \eqref{eq:fourier} with $n \to \infty$. 
A good approximation for the density can thus already be obtained by a truncated series $\u_N(\r,\s) = \sum_{n=0}^N \u_n(\r) \H_n(\s)$ with $N$ small. Inserting this ansatz into equation \eqref{eq:rte1} leads to the well-known $P_N$-approximations, 
which have been used successfully for theoretical investigations and for the design of numerical approximation schemes; 
we refer to \cite{CaseZweifel67,DuderstadtMartin79,Vladimirov61} and \cite{LewisMiller84,MarchukLebedev86,WrightArridgeSchweiger09} for details. 
% For further approaches let us refer to \cite{DahmenEtAl2014,GuermondKanschatRagusa2014} and the references therein.
While the formal derivation of the $P_N$-approximation for the radiative transfer equation \eqref{eq:rte1} is rather straight forward, 
the correct approximation of the vacuum boundary conditions \eqref{eq:rte2} has been subject of controversial discussion for many years; 
see \cite{Modest} for a comprehensive overview. 
A systematic treatment is possible by variational formulations \cite{Agoshkov98,EggerSchlottbom12,ManResSta00}, 
in which the boundary conditions \eqref{eq:rte2} give rise to half-space integrals of the form 
\begin{align} \label{eq:halfspace}
\int_{\dR} \int_{\S : \s \cdot \n(\r)<0} \u(\r,\s) \v(\r,\s) |\s \cdot \n|  \d\s \d\r;
\end{align}
here, $\v$ denotes the test function in the variational formulation. 
The appropriate boundary conditions for the $P_N$-approximation can then be obtained rigorously by
Galerkin projection of the underlying variational principle. 
Let us note that the half-space integrals \eqref{eq:halfspace} no longer have a tensor product structure,
which actually leads to a dense coupling of almost all moments $\u_n(\r)$ in the spherical harmonics expansion of the system \eqref{eq:rte1}--\eqref{eq:rte2}.
Numerical methods based on $P_N$-approximations, therefore, suffer from a dense coupling of the moment equations 
originating from the non-tensor product structure of the boundary conditions.
This not only complicates the implementation but also negatively affects the performance of corresponding discretization methods.
In this paper, we propose a strategy to overcome these problems associated with the numerical approximation of the boundary conditions \eqref{eq:rte2}. In the spirit of the \emph{perfectly matched layer} approach, which has been successfully used in the context of acoustic and electromagnetic wave propagation  \cite{Berenger94,Hagstrom99}, we proceed as follows:
\begin{itemize}
\item[(i)] In a first step, the domain $\R$ is extended by an absorbing but non-scattering layer of thickness $\ell>0$ with absorption coefficient $\a>0$. If vacuum boundary conditions are used at the outer boundary, this yields an equivalent formulation of problem \eqref{eq:rte1}--\eqref{eq:rte2} on an extended domain $\R^\ell$, whose solution $\ula$ coincides with $\u$ when restricted to $\R$. Due to the presence of the absorbing layer $\R^\ell \setminus \R$, the solution $\ula$ decays exponentially towards $\partial\R^\ell$.
\item[(ii)] In a second step, the vacuum boundary condition at the boundary of the extended domain 
is replaced by a reflection boundary condition. Since $\ula$ is already small at $\partial\R^\ell$,
this introduces a minor perturbation that can be controlled by the absorption parameter $a$ and the thickness $\ell$ of the absorbing layer in terms of an estimate of the form $\|\ula - \wla\| = O(e^{-\ell a})$, where $\wla$ is the solution of the problem with reflection boundary condition.
\end{itemize}
An appropriate choice of the thickness $\ell$ and the absorption coefficient $a$ in the surrounding layer $\R^\ell \setminus \R$ thus allows to obtain solutions $\wla$ of a perturbed problem, whose restriction to $\R$ 
approximates the original solution $\u$ with any desired accuracy. 
The rigorous analysis of this approach will be the main topic of the first part of the paper.
In the second part of the manuscript, we consider the numerical approximation of the problem with reflection boundary conditions outlined in step (ii). Based on the ideas of \cite{EggerSchlottbom12}, we investigate in detail the Galerkin approximation of a mixed variational formulation of the perturbed problem. 
The possible extension of our analysis to other approaches is briefly discussed at the end of the manuscript. 
The main new contributions are the following:
\begin{itemize}
 \item[(iii)] A specific choice of the reflection boundary condition allows us to extend the variational formulation given in \cite{EggerSchlottbom12} to the perturbed problem discussed in step (ii) and to avoid the half-space integrals \eqref{eq:halfspace}. 
 A careful analysis of the variational problem allows us to establish its well-posedness.
 \item[(iv)] Under a mild compatibility condition of the approximation spaces, the Galerkin approximation of the mixed variational method
 leads to discretization schemes with provable convergence properties. A full analysis of the general approach is given
 and as a particular example, we discuss in some detail the extension of the $P_N$-finite element approximation considered in \cite{EggerSchlottbom12,LewisMiller84,WrightArridgeSchweiger09}. Due to the absence of the half-space integrals \eqref{eq:halfspace}, which are eliminated by the particular reflection boundary conditions, the resulting linear systems can be shown to have an optimal sparsity and a tensor product structure that allows for a very efficient solution.
\end{itemize}
For illustration of theoretical results and in order to demonstrate the efficiency of our approach, we report about some 
numerical tests for the proposed $P_N$-finite element approximation with the perfectly matched layer approach at the end of the manuscript.
Before we proceed, let us mention a recent paper \cite{PowellCoxArridge18}, where the authors considered a somewhat related idea. In this work, a PML approach is used to obtain a problem with periodic boundary conditions in space which in turn can be discretized efficiently by Fourier series.  
The efficiency of the resulting pseudospectral approximation was illustrated by numerical tests. A full analysis of 
this approach is not available yet, but might be possible with the arguments presented here. 

\medskip 

The remainder of the manuscript is organized as follows: 
In Section~\ref{sec:prelim}, we introduce our notation and main assumptions and we recall some preliminary 
results about well-posedness of the radiative transfer equation. 
In Section~\ref{sec:extension}, we then formulate and analyze the problem in step (i) that arises from 
extension of the computational domain by an absorbing layer.
Section~\ref{sec:reflection} deals with the analysis of the perturbed problem with reflection boundary conditions described in step (ii). 
In Section~\ref{sec:variational} we consider step (iii) of our approach by deriving a mixed variational 
formulation of the problem with reflection boundary conditions. In addition, we investigate its systematic Galerkin approximation
and establish rigorous error estimates.
In Section~\ref{sec:pnfem}, we address point (iv) by considering an extension of  
the mixed $P_N$-finite element method proposed in \cite{EggerSchlottbom12} to the setting considered here. 
We state a basic compatibility condition of the approximation spaces and discuss some further properties of the method.
In Section~\ref{sec:num}, we comment on the efficient implementation of this discretization scheme 
and then present some numerical tests for illustration of the efficiency of the method.
We close with a short discussion and indicate some possible extensions.
\section{Preliminaries and notation}\label{sec:prelim}
Let us start by introducing our notation and basic assumptions that will allow us to guarantee the well-posedness of the radiative transfer problem under consideration. 
Throughout the manuscript, we assume that 
\begin{myass}
 \item[(A1)] the domain $\R \subset \RR^3$ is bounded and convex and we let 
 $\S=\S^2$ be the unit sphere in $\RR^3$. The phase space is denoted by $\D=\R \times \S$. 
\end{myass}
Note that the assumption about convexity of $\R$ is not very restrictive, since one may always extend the domain to a larger ball
if required. 
In our numerical tests, we will actually consider the case $\R \subset \RR^2$ and $\S=\S^2$, which amounts to 
In our numerical tests, we will consider the case $\R \subset \RR^2$ and $\S=\S^2$, which amounts to 
a problem with invariance in one spatial direction.
As usual, we decompose the boundary $\partial\D=\partial\R\times\S$ via 
\begin{align*}
\partial\D_\pm=\{ (\r,\s) \in \partial\D : \pm \s \cdot \n(\r)>0\}
\end{align*}
into an inflow part $\partial\D_-$ and an outflow part $\partial\D_+$. 
Let us recall at this point that boundary conditions \eqref{eq:rte2} are required only for the inflow part $\partial\D_-$ of the boundary.
\subsection{Function spaces}
For any sufficiently regular submanifold $M \subset \RR^n$ and any $1 \le p \le \infty$, 
we denote by $L^p(M)$ the usual Lebesgue space of functions on $M$ and we use $(u,v)_M = \int_M u v \, dM$ 
to denote the scalar product of $L^2(M)$.
Following the notation of \cite{DautrayLions6}, we further write
\begin{align*}
W^{p}(\D) = \{ \u \in L^p(\D) : \sgrad \u \in L^p(\D)\}
\end{align*}
for the Sobolev space of functions with integrable weak directional derivatives and finite norm given by
\begin{align*}
\|u\|_{W^p(\D)}^p = \|u\|_{L^p(\D)}^p + \|\sgrad u\|_{L^p(\D)}^p.
\end{align*}
Let us recall that functions $\u  \in W^p(\D)$ possess well-defined traces on $\partial\D$ in some weighted $L^p$ spaces; see e.g. \cite{Agoshkov98,DautrayLions6}. 
For a.e. $(\r,\s) \in \partial\D$, we may thus define 
\begin{align*}
\u_{\pm}(\r,\s) = 
\begin{cases} 
  \u(\r,\s), & \pm \s \cdot \n(\r) > 0, \\
  0              , & \text{else}.
\end{cases} 
\end{align*}
This induces a natural splitting $u = u_- + u_+$ of the boundary values on $\partial\D$ 
into an ingoing trace $\u_-$ and an outgoing trace $\u_+$,
which are, respectively, supported on the corresponding parts $\partial\D_-$ and $\partial\D_+$ of the boundary.
By the divergence theorem and a density argument, one can see that
\begin{align} \label{eq:ipp}
(\sgrad \u,\v)_\D = -(\u, \sgrad \v)_\D + (\sn \u, \v)_{\partial\D},
\end{align}
holds for all functions $\u,\v$ with sufficient smoothness and integrability properties. 
This integration-by-parts formula motivates the definition of weighted trace spaces
\begin{align*}
L^p(\partial\D;|\sn|) = \{g : \partial\D \to\RR \quad \text{with }  \int_{\partial\D} |g(\r,\s)|^p |\sn| \d(\r,\s) < \infty\},
\end{align*}
which are strictly smaller than the natural trace spaces of $W^p(\D)$; see \cite{Agoshkov98,DautrayLions6} for details.
For any $u \in W^p(\D)$ with regular ingoing trace $u_- \in L^p(\partial\D;|\sn|)$, one can deduce from \eqref{eq:ipp} with $v=|u|^{p-2} u$ and some elementary manipulations that
\begin{align} \label{eq:outoing-bound}
\|u_+\|_{L^p(\partial\D;|\sn|)}^p 
&= \|u_-\|_{L^p(\partial\D;|\sn|)}^p + p (\sgrad \u, |u|^{p-2} u)_{\D} \\
&\le \|u_-\|_{L^p(\partial\D;|\sn|)}^p + \|\sgrad \u\|_{L^p(\D)}^p + \tfrac{p-1}{p} \|u\|_{L^p(\D)}^p. \notag
\end{align}
Hence, the norm of the outgoing trace $u_+$ can be controlled in terms of the norm of the ingoing trace $u_-$ and the norm of the solution $u$ in $W^p(\D)$.
\subsection{Basic assumptions and well-posedness}
In order to ensure the well-posedness of the radiative transfer problem \eqref{eq:rte1}--\eqref{eq:rte2}, we will make 
the following structural assumptions on the model parameters, namely
\begin{myass}
\item[(A2)] $\mu\in L^\infty(\R)$ with $0 \le \mu(\r) \le \overline{\mu}$;
\item[(A3)] $k : L^\infty(\R \times (-1,1))$ with $0 \le \k(\r,\theta) \le \overline\k$ and $\int_\S \k(\r;\s \cdot \s') \d\s'\leq\mu(\r)$. 
\end{myass}
These rather general conditions are motivated by physical considerations.
The well-posedness of problem~\eqref{eq:rte1}--\eqref{eq:rte2} is a special case of the following result, which also covers inhomogeneous boundary conditions. 
\begin{theorem} \label{thm:1a}
Let (A1)--(A3) hold and let 
$
(\K\u)(\r,\s):= \int_{\S} \k(\r;\s \cdot \s') \u(\r,\s') \d\s'
$ 
denote the scattering operator with kernel function $k$. 
Then, for any $q \in L^p(\D)$ and any $g \in L^p(\partial\D;|\sn|)$, the radiative transfer problem
\begin{alignat}{4}
\sgrad \u + \mu \u &= \K\u + q &\quad& \text{in } \D,           \label{eq:rte1a}\\
\u_- &= g_-                    &\quad& \text{on } \partial\D,   \label{eq:rte2a}
\end{alignat}
has a unique solution $\u \in W^p(\D)$ and there holds
\begin{align*}
\|\u\|_{W^p(\D)} + \|u_+\|_{L^p(\partial\D;|\sn|)}  
\le C (\|q\|_{L^p(\D)} + \|g_-\|_{L^p(\partial\D;|\sn|)})
\end{align*}
with constant \emph{$C$ depending only on $\overline \mu$ and ${\rm diam}(\R)$}. 
\end{theorem}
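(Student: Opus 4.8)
The plan is to construct the solution from the collision-free transport problem by a fixed-point (source-iteration) argument, using assumption (A3) to subordinate the scattering operator $\K$ to the absorption. First I would treat the pure transport problem $\sgrad \u + \mu \u = f$ with $\u_- = g_-$. Since $\R$ is bounded and convex by (A1), the characteristics are straight segments of length at most ${\rm diam}(\R)$, each crossing $\partial\R$ exactly once, so the problem can be solved explicitly by integration along characteristics. Because $\mu \ge 0$, the attenuation factor $e^{-\int\mu}$ is bounded by $1$, and a Hardy/Jensen-type estimate together with the chord bound gives $\|\u\|_{L^p(\D)} \le {\rm diam}(\R)\,\|f\|_{L^p(\D)} + C\|g_-\|_{L^p(\partial\D;|\sn|)}$; the directional derivative is then recovered from the equation and the outflow trace is controlled through \eqref{eq:outoing-bound}. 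This defines a bounded, positivity-preserving solution operator $\T:(f,g_-)\mapsto\u$, so that \eqref{eq:rte1a}--\eqref{eq:rte2a} is equivalent to the fixed-point equation $(I-\T\K)\u=\T(\q,g_-)$.

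Next I would derive the a priori estimate that yields uniqueness and the trace bound at once. Testing \eqref{eq:rte1a} with $|\u|^{p-2}\u$ and using the integration-by-parts identity \eqref{eq:ipp} produces the boundary contribution $\tfrac{1}{p}(\|\u_+\|^p-\|\u_-\|^p)$ in the weighted trace norm. The decisive point is that, by (A3) and Young's inequality, the scattering pairing is dominated by the absorption pairing, $(\K\u,|\u|^{p-2}\u)_\D \le (\mu\u,|\u|^{p-2}\u)_\D$, so the collision operator $\mu I-\K$ is accretive with respect to the duality map $|\u|^{p-2}\u$. For homogeneous data this forces $\u_+=0$ and, via the characteristic representation, $\u\equiv0$, giving uniqueness; in general it bounds $\|\u_+\|_{L^p(\partial\D;|\sn|)}$ in terms of $\|g_-\|$ and $\|\q\|\,\|\u\|^{p-1}$.

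For existence and the quantitative bound I would show that $\L:=\T\K$ is a strict contraction with factor $\rho:=1-e^{-\overline{\mu}\,{\rm diam}(\R)}<1$. Applying $\L$ to the constant function and comparing with the solution $W$ of $\sgrad W+\mu W=\mu$, $W_-=0$, which satisfies $W=1-e^{-\int_0^{\tau_-}\mu}\le 1-e^{-\overline{\mu}\,{\rm diam}(\R)}<1$ along each characteristic, the positivity of $\T$ and $\K$ together with the subcriticality $\int_\S \k\,d\s'\le\mu$ yield $\|\L\|\le\rho$. A Neumann series then inverts $I-\L$ with norm at most $1/(1-\rho)$, so $\u=(I-\L)^{-1}\T(\q,g_-)$ exists; composing with the bounds on $\T$ and with \eqref{eq:outoing-bound} gives the asserted estimate with a constant depending only on $\overline{\mu}$ and ${\rm diam}(\R)$.

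The main obstacle is precisely this strict-contraction step and its uniformity in $p$: the comparison argument is cleanest in the $L^\infty$ norm, where the maximum principle directly produces the factor $\rho$, whereas for general $p$ one must transfer the estimate by an interpolation/duality argument (equivalently, verify that $\K$ remains subordinate to the absorption after composition with the transport solve, which is delicate once $\mu$ varies in space). This is where assumption (A3) and the finite chord length enforced by convexity enter in an essential way; the remaining steps are bookkeeping with the identity \eqref{eq:ipp} and the outflow bound \eqref{eq:outoing-bound}.
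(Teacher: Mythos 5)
You should first know that the paper does not prove Theorem~\ref{thm:1a} by hand: its proof consists of citing \cite[Theorems~1.1 and 8.3]{EggerSchlottbom2014Lp} for existence, uniqueness and the $W^p(\D)$ bound, and of deducing the outgoing-trace estimate from \eqref{eq:outoing-bound} --- exactly your last step. Your proposal is therefore, in substance, a reconstruction of the proof of the cited reference, and its skeleton is the right one: explicit solution of the pure transport problem along chords (using convexity and ${\rm diam}(\R)<\infty$ from (A1)), subordination of scattering to absorption via Young's inequality and the symmetry of $k(\r,\s\cdot\s')$ in $(\s,\s')$, source iteration, and a Neumann series with a factor $\rho = 1-e^{-\overline\mu\,{\rm diam}(\R)}$, which indeed yields a constant depending only on $\overline\mu$ and ${\rm diam}(\R)$.

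Two steps need repair, however. The central claim that $\L=\T\K$ is a norm contraction with factor $\rho$ on every $L^p$ is not correct as stated. The comparison with $W=\T\mu\le\rho$ works on $L^\infty$; on $L^1$ it is the \emph{reversed} composition $\K\T$ that contracts, via the balance identity $\int_{\partial\D_+}u_+|\sn| + \int_\D \mu u\,=\int_\D f$ for $u=\T f$, $f\ge 0$, combined with the chord-wise lower bound $\int_{\partial\D_+}u_+|\sn| \ge e^{-\overline\mu\,{\rm diam}(\R)}\|f\|_{L^1(\D)}$. For variable $\mu$, neither $\|\T\K\|_{L^1\to L^1}$ nor $\|\K\T\|_{L^\infty\to L^\infty}$ need be below one (take $\mu=\overline\mu$ and $k$ supported on a thin slab with $\mu\approx 0$ elsewhere: mass scattered in the slab propagates essentially undamped over a length $\approx{\rm diam}(\R)$), so you cannot interpolate a single operator directly. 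What does work --- and is essentially the route of \cite{EggerSchlottbom2014Lp} --- is to interpolate \emph{powers}: $\|(\K\T)^n\|_{L^1}\le\rho^n$ by positivity, $\|(\K\T)^n\|_{L^\infty}=\|\K(\T\K)^{n-1}\T\|_{L^\infty}\le \overline\mu\,{\rm diam}(\R)\,\rho^{n-1}$ by your $L^\infty$ comparison, whence Riesz--Thorin gives $\|(\K\T)^n\|_{L^p}\le C\rho^n$ uniformly in $p$, and the Neumann series for $u=\T q_g+\T\K u$ converges; this also settles uniqueness. You flagged this as the delicate point but left it at ``interpolation/duality''; the ordering of $\K$ and $\T$ is exactly where the naive version fails. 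Secondly, your accretivity-based uniqueness argument is incomplete in the critical case $\int_\S k\,d\s'=\mu$: testing with $|u|^{p-2}u$ only yields $u_+=0$ and the vanishing of a merely semi-definite form, which does not by itself force $u\equiv 0$; uniqueness should simply be read off from the invertibility of $I-\T\K$ that the series already provides.
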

\begin{proof}
Existence of a unique solution and the bound for $u$ in the norm of $W^p(\D)$ follows from  \cite[Theorem~1.1 and Theorem~8.3]{EggerSchlottbom2014Lp}. The remaining estimate for the outgoing trace $u_+$ can then be deduced from \eqref{eq:outoing-bound}.
\end{proof}
Note that problem \eqref{eq:rte1}--\eqref{eq:rte2} is just a special case of \eqref{eq:rte1a}--\eqref{eq:rte2a} with $g_-=0$.
Under assumptions (A1)--(A3), the model problem \eqref{eq:rte1}--\eqref{eq:rte2} is therefore well-posed.
\bigskip 
\begin{center}
\sc Part 1: The perfectly matched layer approach 
\end{center}
\medskip 
In the following two sections, we investigate the approximation of \eqref{eq:rte1}--\eqref{eq:rte2} 
by radiative transfer problems on larger domains. We start with an equivalent problem and then introduce a perturbation by 
incorporating a reflection boundary condition. 
\section{Equivalent problems on larger domains} \label{sec:extension}
Problem \eqref{eq:rte1}--\eqref{eq:rte2} describes the propagation of particles through a domain $\R$ surrounded by vacuum. 
We will now show that the domain $\R$ can also be embedded in an absorbing medium without changing the solution. 
For any $\r\in \RR^d \setminus \R$ and $\s \in \S$, we denote by
\begin{align} \label{eq:ell}
\ell(\r,\s) = \inf \{l >0: \r - l \s \in \R\}
\end{align}
the distance of the point $\r$ to the boundary $\partial\R$ of the computational domain along the path with direction $-\s$ starting at $\r$; see Figure~\ref{fig:sketch}. 
Using standard convention, we set $\ell(\r,\s)=\infty$, if the corresponding path does not intersect the boundary $\partial\R$. 
\begin{figure}[ht!]
\begin{center}
    \setlength{\unitlength}{275bp}%
    \begin{picture}(1,0.87544529)%
      \put(0,0){\includegraphics[width=1\unitlength,page=1]{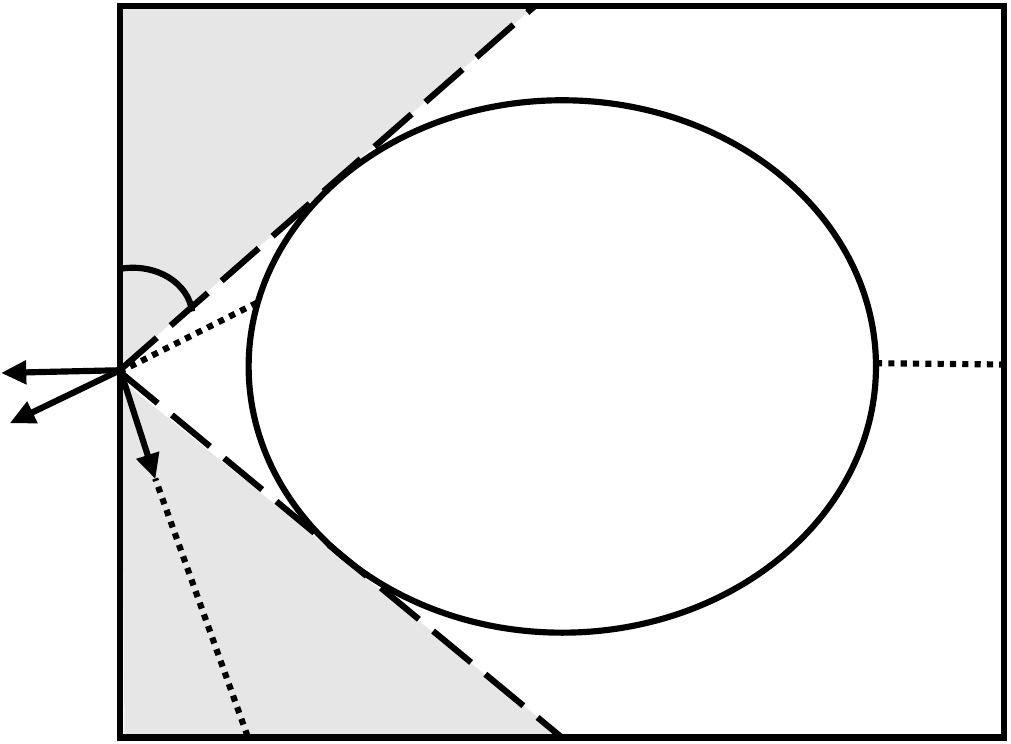}}%
      \put(0.55,0.37){$\R$}%
      \put(0.92533403,0.67){$\R^\ell$}%
      \put(0.92215333,0.38){$\ell$}%
      \put(0.27,0.42){$r-\ell(\r,\s)s$}%%
      \put(0.01092941,0.39384543){$n(\r)$}%
      \put(0.059167,0.3111482){$\s$}%
      \put(0.09,0.39){$\r$}%	  
	  \put(0.13,0.42){$\alpha'$}%	  
 	  \put(0.16,0.27){$\bar \s$}%
 	  \put(0.25,0.02){$\bar \r^*=r+ t^* \bar s$}%
    \end{picture}%
\end{center}
 \caption{
   Sketch of the geometric setup for two spatial dimensions. $\R$ corresponds to the disc. The extended domain $\R^\ell$ corresponds to the bounding rectangle. The distance between $\partial \R$ and $\partial\R^\ell$ is given by $\ell$, and the point $(\r,\s)$ is an element of the outflow boundary $\partial\D^\ell_+$ of the layer such that $\ell(r,s)<\infty$, i.e., $r-\ell(r,s)s\in\partial \R$. 
 Moreover, we have that $\s\cdot\n(\r)\geq\sin(\alpha')$.
   On the other hand, lines through $r$ that pass through the gray area do not intersect $\R$. 
 For instance, the path $t\mapsto r + t \bar s$, $t\in\RR$, does not intersect $\R$. Moreover, there exists a unique $t^*>0$ such that $r^*=r+t^* \bar s\in\partial\R^\ell$, and $(r^*,\bar s)\in\partial\D_+^\ell$, which will become important for the construction of the reflection boundary conditions described in step (ii).
   \label{fig:sketch}}
\end{figure}
We then consider extensions $\R^\ell$ of the domain $\R$ with the following properties:
\begin{myass}
\item[(A4)] For given $\ell,\eta>0$, let $\R^\ell \subset \RR^3$ be a bounded convex domain with $\overline \R \subset \R^\ell$
 compactly embedded and such that $\ell(\r,\s) \ge \ell$ for a.e. $(\r,\s) \in \partial\D^\ell=\partial\R^\ell\times\S$ and $\ell(\r,\s)=\infty$ for a.e. $(\r,\s) \in \partial\D^\ell$ with $\sn(\r) \leq \eta=:\sin \alpha$.
\end{myass}
\begin{remark} \label{rem:geometry} \rm
Note that $\ell \le {\rm dist}\{\partial\R^\ell,\R\}$ is a lower bound on the thickness of the extension layer $\widetilde \R = \R^\ell \setminus \R$. Moreover, $0 <\eta = \sin \alpha$ yields a lower bound on the angles $\alpha'$ at which beams originating from points $r \in \R$ can hit the boundary $\partial\R^\ell$ and lines in direction $\s$ going through points $r \in \R^\ell \setminus \R$ with $\ell(\r,\s)=\ell(\r,-\s)=\infty$ do not intersect the domain $\R$; see~Figure~\ref{fig:sketch} for illustration. These geometric properties will become important for our analysis below. 
\end{remark}
As a next step, we extend the definition of the model parameters to $\R^\ell$ by
\begin{alignat*}{7}
\mu^{\ell,\a}(\r) &= \mu(\r), 
 &\quad k^\ell(\r,\cdot) &= k(\r,\cdot), 
 &\quad q^\ell(\r,\cdot) &= q(\r,\cdot), &&\qquad \r \in \R,  \\   
\mu^{\ell,\a}(\r) &= \a,
 &\quad k^\ell(\r,\cdot) &= 0,
 &\quad q^\ell(\r,\cdot) &= 0,           &&\qquad \r \in \R^\ell \setminus \R,
\end{alignat*}
and we denote by $\K^\ell$ the scattering operator associated to the kernel $k^\ell$. 
The choice $\a = 0$ means that $\R$ is surrounded by vacuum, while $\a>0$ 
models the case that the original domain is embedded in an absorbing but non-scattering medium. 
On the extended domain $\D^\ell = \R^\ell \times \S$, we then consider the problem 
\begin{alignat}{4}
\sgrad \u^{\ell,\a} + \mu^{\ell,\a} \u^{\ell,\a} &= \K^\ell \u^{\ell,\a} + \q^\ell &\quad& \text{on } \D^\ell, \label{eq:ext1}\\
\u^{\ell,\a}_- &= 0 && \text{on } \partial \D^\ell. \label{eq:ext2}
\end{alignat}
With the same arguments as used for the proof of Theorem~\ref{thm:1a}, one can again obtain the existence of a unique solution. 
Due to the particular definition of the parameters in the extension layer, we obtain some further properties of the solution.
\begin{theorem} \label{thm:1}
Let (A1)--(A4) hold and $\a \ge 0$. Then for any $q \in L^p(\D)$, 
the extended problem \eqref{eq:ext1}--\eqref{eq:ext2} has a unique solution $\u^{\ell,\a}\in W^p(\D^\ell)$,
which can be represented as $\ula = E^{\ell,\a} \u$, where $\u$ is the solution of \eqref{eq:rte1}--\eqref{eq:rte2} and 
where the extension operator $E^{\ell,\a}$ is defined by 
\begin{align*}
(E^{\ell,\a} u)(\r,\s) = 
\begin{cases} 
u(\r,\s), & (\r,\s) \in \D, \\ 
e^{-\a\ell(\r,\s)} u(\r-\ell(\r,\s) \s,\s), & (\r,\s) \in \D^\ell \setminus \D, \ 0 < \ell(\r,\s)<\infty, \\ 
0, & \text{else}.
\end{cases}
\end{align*}
Moreover, $\u^{\ell,\a}|_\D = \u$ and $u^{\ell,\a}(\r,\s)=0$ for $(\r,\s) \in \partial\D^\ell$ with $\ell(\r,\s)=\infty$,
and
\begin{align} \label{eq:decay}
\|\u^{\ell,\a}\|_{L^p(\partial\D^\ell)} \le C e^{-\a \ell}\|q\|_{L^p(\D)}
\end{align}
with constant $C$ depending only on $\overline{\mu}$, ${\rm diam}(\R)$, and the constant $\eta$ in (A4).
\end{theorem}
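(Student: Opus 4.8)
The plan is to obtain existence and uniqueness from Theorem~\ref{thm:1a} applied on the extended domain, and then to \emph{identify} the unique solution with the explicitly given function $E^{\ell,\a}\u$ by checking that the latter solves \eqref{eq:ext1}--\eqref{eq:ext2}. First I would verify that the extended coefficients still satisfy (A2)--(A3) on $\R^\ell$: boundedness is immediate, and the subcriticality condition $\int_\S k^\ell\d\s'\le\mu^{\ell,\a}$ holds trivially in the layer, where $k^\ell=0$ and $\mu^{\ell,\a}=\a\ge 0$. Hence Theorem~\ref{thm:1a} (with $g_-=0$) yields a unique solution $\ula\in W^p(\D^\ell)$, and for the inner solution $\u$ it provides the trace bound $\|u_+\|_{L^p(\partial\D;|\sn|)}\le C\|\q\|_{L^p(\D)}$.

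Next I would verify that $\w:=E^{\ell,\a}\u$ is this solution. Inside $\D$ one has $\w=\u$, which solves the radiative transfer equation. In the layer $\D^\ell\setminus\D$, the function $\w(\r,\s)=e^{-\a\ell(\r,\s)}\u(\bar\r,\s)$ with $\bar\r=\r-\ell(\r,\s)\s\in\partial\R$ is constant along each characteristic apart from the exponential factor, so that the weak directional derivative satisfies $\sgrad\w=-\a\w$ there; since $k^\ell=\q^\ell=0$ in the layer, this is exactly \eqref{eq:ext1}. The decisive point for $\w\in W^p(\D^\ell)$ is that $\w$ is \emph{continuous across $\partial\R$ along characteristics}: as $\ell(\r,\s)\to 0$ the exponential tends to $1$ and $\bar\r\to\r$, so the layer trace matches the outgoing trace $u_+$ of $\u$ and no singular contribution to $\sgrad\w$ appears at $\partial\R$. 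Finally, the third case in the definition of $E^{\ell,\a}$ gives $\w=0$ whenever $\ell(\r,\s)=\infty$, which is the asserted boundary vanishing; in particular every inflow point $(\r,\s)\in\partial\D^\ell$ (with $\sn(\r)<0$) has $\ell(\r,\s)=\infty$, since the backward ray $\r-l\s$ immediately leaves the convex domain $\R^\ell$, so that \eqref{eq:ext2} holds. By uniqueness $\ula=\w$, and in particular $\ula|_\D=\u$.

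For the decay estimate I would argue on $\partial\D^\ell$, where $\ula$ is supported on the set of $(\r,\s)$ with $0<\ell(\r,\s)<\infty$. On this set (A4) gives $\ell(\r,\s)\ge\ell$ and $\sn(\r)>\eta$, so that pointwise $|\ula(\r,\s)|\le e^{-\a\ell}|u_+(\bar\r,\s)|$. The core of the argument is a change of variables: for fixed $\s$, convexity of $\R$ and $\R^\ell$ makes $\r\mapsto\bar\r=\r-\ell(\r,\s)\s$ a bijection from this part of $\partial\R^\ell$ onto the outflow boundary $\{\bar\r\in\partial\R:\sn(\bar\r)>0\}$, and flux conservation along the ray tube in direction $\s$ yields the Jacobian identity $|\sn(\r)|\,dS^\ell(\r)=|\sn(\bar\r)|\,dS(\bar\r)$ relating the surface measures on $\partial\R^\ell$ and $\partial\R$. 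Bounding $1/|\sn(\r)|\le 1/\eta$ on the support then gives
\begin{align*}
\|\ula\|_{L^p(\partial\D^\ell)}^p
&\le e^{-p\a\ell}\int_\S\int_{\partial\R}|u_+(\bar\r,\s)|^p\frac{|\sn(\bar\r)|}{|\sn(\r)|}\,dS(\bar\r)\,d\s \\
&\le \tfrac{1}{\eta}e^{-p\a\ell}\|u_+\|_{L^p(\partial\D;|\sn|)}^p,
\end{align*}
and inserting the trace bound from Theorem~\ref{thm:1a} produces $\|\ula\|_{L^p(\partial\D^\ell)}\le C\eta^{-1/p}e^{-\a\ell}\|\q\|_{L^p(\D)}$, with the asserted dependence of the constant.

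I expect the main obstacle to be the geometric change of variables in the last step: establishing that $\r\mapsto\bar\r$ is a measurable bijection between the two boundary pieces and justifying the projected-area (flux-conservation) identity for the surface measures rigorously, rather than merely heuristically. The role of the convexity of both $\R$ and $\R^\ell$, and of the angle bound $\eta$ from (A4) ensuring that $\sn(\r)>\eta$ stays away from grazing directions, is exactly to keep this map well behaved and its Jacobian uniformly bounded.
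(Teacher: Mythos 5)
Your proposal is correct and follows the same overall skeleton as the paper: existence and uniqueness from the a priori theory applied on the extended domain, identification of the unique solution with $E^{\ell,\a}\u$ by checking the pure transport equation $\sgrad \w=-\a\w$ in the layer, trace matching across $\partial\R$, and the vanishing of the inflow trace on $\partial\D^\ell$ by convexity (the paper packages all of this into Lemma~\ref{lem:extension}, delegating the $W^p$-regularity and the layer bounds to cited a priori estimates for the transport problem in $\widetilde\D=\D^\ell\setminus\D$ rather than verifying them by hand). The genuine difference lies in how the decay estimate \eqref{eq:decay} is obtained. You derive the weighted flux identity $\|E^{\ell,0}\u\|_{L^p(\partial\D^\ell;|\sn|)}=\|u_+\|_{L^p(\partial\D;|\sn|)}$ geometrically, via the a.e.\ bijection $\r\mapsto\bar\r=\r-\ell(\r,\s)\s$ and the ray-tube Jacobian relation $|\sn(\r)|\,dS^\ell(\r)=|\sn(\bar\r)|\,dS(\bar\r)$, and you correctly flag this as the delicate step: for merely convex (hence Lipschitz) boundaries the pointwise Jacobian identity holds only almost everywhere and its rigorous justification is a nontrivial co-area-type argument. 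The paper sidesteps this entirely by applying the integration-by-parts identity \eqref{eq:outoing-bound} on $\widetilde\D$ to the unattenuated extension $E^{\ell,0}\u$, which satisfies $\sgrad E^{\ell,0}\u=0$ with zero inflow on $\partial\widetilde\D\cap\partial\D^\ell$; the equality of the weighted outflow norm on $\partial\D^\ell$ with $\|u_+\|_{L^p(\partial\D;|\sn|)}$ then drops out of the divergence theorem in integrated form, with no pointwise change of variables needed. From there the two routes coincide: the pointwise attenuation bound $|E^{\ell,\a}\u|\le e^{-\a\ell}|E^{\ell,0}\u|$ using $\ell(\r,\s)\ge\ell$ from (A4), conversion from the weighted to the unweighted boundary norm through the factor $\eta^{-1/p}$ (since the trace is supported where $\sn>\eta$), and the closing bound $\|u_+\|_{L^p(\partial\D;|\sn|)}\le\|\u\|_{W^p(\D)}\le C\|q\|_{L^p(\D)}$, so both arguments yield the same constant, independent of $\a$ and $\ell$ as required. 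Your geometric route buys an explicit picture of where the boundary mass sits; the paper's route buys rigor at low boundary regularity for free and a reusable mechanism (the same device reappears in Lemma~\ref{lem:z} and Lemma~\ref{lem:cor1}). One detail you leave implicit: to invoke uniqueness in $W^p(\D^\ell)$ you also need the $L^p$-bound of $E^{\ell,\a}\u$ on the layer itself (the paper's estimate $\a\|E^{\ell,\a}\u\|_{L^p(\D^\ell\setminus\D)}\le\a^{1-1/p}\|\u\|_{W^p(\D)}$ in Lemma~\ref{lem:extension}); this follows from the same ray-coordinate change of variables you already set up, so it is an omission of detail rather than of substance.
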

\begin{proof}
Existence of a unique solution $\u^{\ell,\a}$ follows from \cite[Theorem~1.1 and Theorem~8.3]{EggerSchlottbom2014Lp}.
The remaining assertions are proven by the following lemma.
\end{proof}
\begin{lemma} \label{lem:extension}
Let (A1)--(A4) hold and let $\u \in W^p(\D)$ with $u_-=0$ on $\partial\D$.
Then for any $a \ge 0$, we have $E^{\ell,\a} \u \in W^p(\D^\ell)$ and 
\begin{align*}
\|\sgrad E^{\ell,\a}\u\|_{L^p(\D^\ell \setminus \D)} = \a \|E^{\a,\ell}\u\|_{L^p(\D^\ell \setminus \D)} \le
\a^{1-\frac{1}{p}} \|\u\|_{W^p(\D)}.
\end{align*}
Moreover, $(E^{\ell,\a} \u)(\r,\s)=0$ for any $(\r,\s) \in \partial\D^\ell$ with $\ell(\r,\s)=\infty$ and, therefore,
\begin{align*}
\eta^{1/p} \|E^{\ell,\a} \u\|_{L^p(\partial\D^\ell)} 
&\le \|E^{\ell,a} \u\|_{L^p(\partial\D^\ell;|\sn|)} \\
&\le e^{-\a\ell} \|u_+\|_{L^p(\partial\D;|\sn|)}
\le e^{-\a \ell} \|\u\|_{W^p(\D)}. 
\end{align*}
\end{lemma}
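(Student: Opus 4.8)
The plan is to verify the three claims of Lemma~\ref{lem:extension} by direct computation using the explicit formula for the extension operator $E^{\ell,\a}$ together with the integration-by-parts estimate \eqref{eq:outoing-bound} from the preliminaries. First I would establish the directional-derivative identity on the layer $\D^\ell \setminus \D$. On this region the extension has the form $(E^{\ell,\a}\u)(\r,\s) = e^{-\a\ell(\r,\s)}\u(\r-\ell(\r,\s)\s,\s)$ whenever $0<\ell(\r,\s)<\infty$. The key geometric observation is that along a transport line $t\mapsto \r+t\s$ the function $\ell(\r+t\s,\s)$ increases at unit rate, i.e.\ $\s\cdot\nabla \ell(\r,\s)=1$ on the layer, while the shifted base point $\r-\ell(\r,\s)\s$ stays fixed along the line. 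Differentiating the product in the direction $\s$ therefore kills the $\u$-factor and produces only the contribution from the exponential, giving the pointwise identity $\sgrad(E^{\ell,\a}\u)=-\a\,E^{\ell,\a}\u$ on $\D^\ell\setminus\D$. This yields the stated equality $\|\sgrad E^{\ell,\a}\u\|_{L^p(\D^\ell\setminus\D)}=\a\|E^{\ell,\a}\u\|_{L^p(\D^\ell\setminus\D)}$ immediately.

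Next I would bound $\|E^{\ell,\a}\u\|_{L^p(\D^\ell\setminus\D)}$ in terms of $\|\u\|_{W^p(\D)}$ to obtain the factor $\a^{1-1/p}$. The natural route is to integrate in the transport direction: parametrize the layer by the outflow trace on $\partial\D$ and the arclength $t=\ell(\r,\s)$, so that the base points $\r-\ell(\r,\s)\s$ sweep out $\partial\D_+$. Converting the volume integral over the layer into an integral over $\partial\D_+$ against $dt$, the exponential weight $e^{-p\a t}$ integrates against $dt$ on $(0,\infty)$ to give a factor $\tfrac{1}{p\a}$, and the surviving boundary measure carries the weight $|\sn|$. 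This produces a bound of the shape $\a^{-1}\|\u_+\|_{L^p(\partial\D;|\sn|)}^p$; combining the exponent $1$ (from the derivative identity) with $-1/p$ (from the $t$-integration) gives the claimed power $\a^{1-1/p}$, and the outflow trace is controlled by $\|\u\|_{W^p(\D)}$ via \eqref{eq:outoing-bound} with $\u_-=0$.

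For the boundary estimates I would argue as follows. The vanishing claim $(E^{\ell,\a}\u)(\r,\s)=0$ for $(\r,\s)\in\partial\D^\ell$ with $\ell(\r,\s)=\infty$ is read directly from the third case in the definition of the extension operator, since such transport lines never meet $\R$. The lower bound $\eta^{1/p}\|E^{\ell,\a}\u\|_{L^p(\partial\D^\ell)}\le\|E^{\ell,\a}\u\|_{L^p(\partial\D^\ell;|\sn|)}$ follows because, by (A4) and Remark~\ref{rem:geometry}, on the portion of $\partial\D^\ell$ where the trace is nonzero one has $\ell(\r,\s)<\infty$ and hence $|\sn|\ge\eta$, so the unweighted $L^p$ norm is dominated by the weighted one up to the factor $\eta^{1/p}$. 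Finally, for the weighted trace I would use that on any such line the arclength $t=\ell(\r,\s)$ at the outer boundary satisfies $t\ge\ell$ by (A4), so the exponential factor is at most $e^{-\a\ell}$; pulling this constant out and mapping the outflow boundary $\partial\D^\ell_+$ back to $\partial\D_+$ along the transport lines (which preserves the $|\sn|$-weighted surface measure) yields $\|E^{\ell,\a}\u\|_{L^p(\partial\D^\ell;|\sn|)}\le e^{-\a\ell}\|\u_+\|_{L^p(\partial\D;|\sn|)}$, and a second application of \eqref{eq:outoing-bound} with $\u_-=0$ bounds this by $e^{-\a\ell}\|\u\|_{W^p(\D)}$.

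The main obstacle I anticipate is the change-of-variables bookkeeping connecting the layer and its outer boundary to the outflow boundary $\partial\D_+$ of the original domain: one must verify that the map $(\r,\s)\mapsto(\r-\ell(\r,\s)\s,\s)$ is measure-theoretically well behaved and that the Jacobian factors combine with the transport measure so that the $|\sn|$-weight on $\partial\D$ reappears correctly, using convexity of $\R$ (A1) to guarantee that each transport line meets $\partial\R$ in a single point and that $\ell(\r,\s)$ is differentiable a.e.\ with $\sgrad\ell=1$. Establishing the unit-rate growth of $\ell$ and the measurability of this flow map rigorously is where the geometric care is needed; once that is in place, the remaining estimates are routine applications of Fubini, the one-dimensional integral of $e^{-p\a t}$, and the trace bound \eqref{eq:outoing-bound}.
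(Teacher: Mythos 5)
Your proposal is essentially correct and arrives at all three estimates, but by a genuinely different route than the paper. Where you compute everything directly from the explicit formula for $E^{\ell,\a}$ --- establishing $\sgrad (E^{\ell,\a}\u) = -\a\, E^{\ell,\a}\u$ pointwise via $\sgrad \ell(\r,\s)=1$, converting the layer integral to an integral over $\partial\D_+$ against $e^{-p\a t}\,|\sn|\,d\sigma\,dt$, and mapping $\partial\D^\ell_+$ back to $\partial\D_+$ along transport lines (the $|\sn|$-weighted measure is the projected area on $\s^\perp$, so this map is indeed measure preserving, and convexity in (A1) and (A4) gives the bijectivity you worry about) --- the paper instead identifies the restriction $\wu = (E^{\ell,\a}\u)|_{\D^\ell\setminus\D}$ as the solution of the pure-absorption transport problem $\sgrad\wu + \a\wu = 0$ with inflow data $\wu_-=\u_+$ on $\partial\widetilde\D\cap\partial\D$ and $\wu_-=0$ on $\partial\widetilde\D\cap\partial\D^\ell$, and then cites the a priori estimate of \cite[Theorem~1.2]{EggerSchlottbom2014Lp} for the volume bound and applies the identity \eqref{eq:outoing-bound} on the layer (where $\sgrad E^{\ell,0}\u = 0$ kills the volume term, giving \emph{equality} of ingoing and outgoing weighted trace norms) for the boundary bound. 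Your co-area computation is more self-contained and even yields the slightly sharper constant $(p\a)^{-1/p}$ in place of $\a^{-1/p}$; the paper's argument avoids all change-of-variables bookkeeping by leaning on previously established transport theory, at the price of an external citation.

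One step you do omit: the claim $E^{\ell,\a}\u \in W^p(\D^\ell)$ is not just the layer estimate --- it requires that the glued function has no jump across the interface $\partial\D$, i.e., that the trace of $\u$ from inside and the trace of the extension from the layer side agree a.e.\ on $\partial\R\times\S$. This is where the hypothesis $u_-=0$ enters a second time: on $\partial\D_+$ the layer trace equals $\u_+$ by construction ($t=0$ in your parametrization), while on $\partial\D_-$ convexity of $\R$ forces $\ell(\r,\s)=\infty$ for the layer-side limit, so the outer trace vanishes and matches $u_-=0$. The paper verifies exactly this ($\wu_+=0=\u_-$ and $\wu_-=\u_+$) and then invokes \cite[Remark~2.5]{Agoshkov98} to conclude membership in $W^p(\D^\ell)$. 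Your pointwise derivative identity on the open layer does not by itself rule out a singular (surface) contribution to the distributional derivative across $\partial\D$, so this matching argument should be added; with it, your plan is complete.
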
 
\begin{proof}
By construction, $\widetilde \u = (E^{\ell,\a} \u)|_{\widetilde\D}$, with $\widetilde \D = \D^\ell \setminus \D$, is a solution to
\begin{alignat}{5}
\sgrad \wu + \a \wu &= 0 && \quad \text{in } \widetilde \D, \label{eq:tilde1}\\
\wu_- &= u_+ && \quad \text{on } \partial\widetilde\D \cap \partial\D 
\quad \text{and} \quad
\wu_- &= 0 && \quad \text{on } \partial\widetilde\D \cap \partial\D^\ell. \label{eq:tilde2}
\end{alignat}
Note that the normal vector pointing out of the layer $\widetilde \R = \R^\ell \setminus \R$ has to be used in the definition of $\wu_\pm$,
while that pointing out of $\R$ is used in the definition of $\u_\pm$. 
From \cite[Theorem~1.2]{EggerSchlottbom2014Lp} with $\nu=1$, $\sigma=\a$, and $f=0$ and noting that $\sgrad \wu=-\a \wu$,
we infer that $\wu \in W^p(\widetilde \D)$ and
\begin{align*}
 \|\sgrad \wu\|_{L^p(\widetilde\D)} = \a \|\wu\|_{L^p(\widetilde\D)} \le \a^{1-\frac{1}{p}} \|\u_+\|_{L^p(\partial\D;|\sn|)},
\end{align*}
From \eqref{eq:outoing-bound} and $\u_-=0$ on $\partial\D$, we deduce that 
\begin{align}\label{eq:trace_u}
	\|\u_+\|_{L^p(\partial\D;|\sn|)} \le \|\u\|_{W^p(\D)},
\end{align}
which proves the first estimate.
Moreover, we have $\wu_+=0=\u_-$ and, by \eqref{eq:tilde2},  $\wu_-=\u_+$ on $\partial\D$.
Hence $\wu = u$ on $\partial\D$, which shows that $E^{\ell,\a} \u$ is continuous across $\partial\D$ in the sense of traces.
Together with $(E^{\ell,\a} \u) |_\D = \u \in W^p(\D)$ and $(E^{\ell,\a} \u)|_{\widetilde\D} = \wu \in W^p(\widetilde\D)$, 
this implies that $E^{\ell,\a} \u \in W^p(\D^\ell)$; see \cite[Remark~2.5]{Agoshkov98}. 
By the definition of the extension and condition (A4), one can see that $E^{\ell,\a}\u(\r,\s)=0$ 
for all $(\r,\s) \in \partial\D^\ell$ with $\ell(\r,\s)=\infty$; cf. Figure~\ref{fig:sketch}.
In addition, one can infer that $|E^{\ell,\a}\u(\r,\s)| \le e^{-\a \ell} |E^{\ell,0}\u(\r,\s)|$ on $\partial\widetilde\D$. 
But since $\sgrad E^{\ell,0}\u = 0$ on $\widetilde\D$ and $(E^{\ell,0}\u)_+=0$ on $\partial\widetilde\D\cap \partial\D$ by construction, we may deduce from \eqref{eq:outoing-bound}, with $\D$ replaced by $\widetilde \D$, that
\begin{alignat*}{5}
\|E^{\ell,0}\u\|_{L^p(\partial\D^\ell;|\sn|)}
% &\stackrel{\eqref{eq:tilde2}}{=}\|(E^{\ell,0}\u)_+\|_{L^p(\partial\widetilde\D \cap \partial\D^\ell;|\sn|)} 
&\stackrel{\eqref{eq:tilde2}}{=}\|(E^{\ell,0}\u)_+\|_{L^p(\partial\widetilde\D;|\sn|)} 
 \stackrel{\eqref{eq:outoing-bound}}{=}\|(E^{\ell,0}\u)_-\|_{L^p(\partial\widetilde\D;|\sn|)} \\
&\stackrel{\eqref{eq:tilde2}}{=}\|(E^{\ell,0}\u)_-\|_{L^p(\partial\widetilde\D \cap \partial\D;|\sn|)} 
 =\|\u_+\|_{L^p(\partial\D;|\sn|)}.
\end{alignat*} 
In the last step, we used the continuity of $E^{\ell,0}\u$ across $\partial\D$ and the fact that the normal vectors at $\partial\widetilde\R \cap \partial\R$ and $\partial\R$ have opposite sign. 
Using \eqref{eq:trace_u} and $(E^{\ell,\a} \u)(\r,\s)=0$ for all $(\r,\s) \in \partial\D^\ell$ with $\s \cdot \n(\r) \le \eta$, 
we obtain the second estimate of the lemma.
\end{proof}
\begin{remark} \rm
An important consequence of Theorem~\ref{thm:1} is that the trace of the solution $u^{\ell,\a}$ of the extended problem 
\eqref{eq:ext1}--\eqref{eq:ext2} is an element of $L^p(\partial\D^\ell)$ without weight, i.e., it has somewhat 
higher regularity. This is due to the geometric setting and the purely absorbing but non-scattering behaviour of the surrounding layer and will be important for our further considerations.
\end{remark}

\section{A modified boundary condition} \label{sec:reflection}

The estimate \eqref{eq:decay} implies that the solution $\ula$ can be made arbitrarily small at the outer boundary $\partial\D^\ell$ by choosing the parameters $a,\ell$ sufficiently large. A perturbation of the boundary condition at $\partial\D^\ell$ should, therefore, only have a minor effect.  
As approximation for \eqref{eq:ext1}--\eqref{eq:ext2}, we thus consider in this section the following problem with modified boundary conditions
\begin{alignat}{4}
\sgrad \w^{\ell,\a} + \mu^{\ell,\a} \w^{\ell,\a} &= \K^\ell \w^{\ell,\a} + q^\ell &\quad& \text{in } \D^\ell, \label{eq:per1}\\
\w^{\ell,\a}_- &= R \w^{\ell,\a}_+ && \text{on } \partial\D^\ell,  \label{eq:per2}
\end{alignat}
where $R : L^p(\partial\D^\ell;|\sn|) \to L^p(\partial\D^\ell;|\sn|)$ is an appropriate reflection operator.
Motivated by the considerations of Section~\ref{sec:variational}, we here consider the particular choice
\begin{align}
(R g)(\r,\s) 
= \frac{\sn+1}{\sn-1} g_+(\r,-\s), \qquad (\r,\s) \in \partial\D^\ell. \label{eq:R}
\end{align}
Particles arriving in direction $-\s$ at the boundary $\partial\D^\ell$, 
thus, partially leave the domain or get, otherwise, reflected in the opposite direction $\s$. 
In the analysis of this section, we will, only make use of the following properties.
\begin{lemma} \label{lem:R}
The operator $R : L^p(\partial\D^\ell;|\sn|) \to L^p(\partial\D^\ell;|\sn|)$ 
is linear and $R g =(Rg)_-$. Moreover, $|Rg(r,s)|\leq |g(r,-s)|$ for a.e. $(r,s)\in\partial\D_-^\ell$. If (A4) holds, then $|Rg(r,s)| \le (1-\eta) |g(r,-s)|$ if $\ell(\r,-\s)<\infty$.
\end{lemma}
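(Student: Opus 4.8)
The plan is to verify each claimed property directly from the explicit formula \eqref{eq:R}, namely $(Rg)(\r,\s) = \frac{\sn+1}{\sn-1} g_+(\r,-\s)$, treating the four assertions in turn. \emph{Linearity} is immediate, since $g \mapsto g_+$ is linear (it is just multiplication by the indicator of $\partial\D^\ell_+$) and the scalar factor $\frac{\sn+1}{\sn-1}$ does not depend on $g$.

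\emph{The support property $Rg = (Rg)_-$.} First I would observe that the factor $g_+(\r,-\s)$ is nonzero only when $(\r,-\s)\in\partial\D^\ell_+$, i.e.\ when $(-\s)\cdot\n(\r)>0$, equivalently $\sn<0$, which means $(\r,\s)\in\partial\D^\ell_-$. Hence $Rg$ is supported on the inflow boundary $\partial\D^\ell_-$, so that $(Rg)_-=Rg$ by the definition of the ingoing-trace splitting. (This is precisely why $R$ is an admissible reflection operator for the boundary condition \eqref{eq:per2}.)

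\emph{The pointwise bounds.} For a.e.\ $(\r,\s)\in\partial\D^\ell_-$ we have $\sn<0$, so writing $t=\sn\in(-1,0)$ the reflection factor is $\left|\frac{t+1}{t-1}\right|=\frac{1+t}{1-t}$, and since $t<0$ this quantity is bounded by $1$; substituting $g_+(\r,-\s)$, whose modulus is at most $|g(\r,-\s)|$, yields $|Rg(\r,\s)|\le|g(\r,-\s)|$, the second claim. For the sharper bound under (A4), the key step is the geometric input from Remark~4.1: when $\ell(\r,-\s)<\infty$ for $(\r,\s)\in\partial\D^\ell_-$, assumption (A4) forces $(-\s)\cdot\n(\r)\ge\eta$, i.e.\ $t=\sn\le-\eta$; plugging $t\le-\eta$ into the monotone map $t\mapsto\frac{1+t}{1-t}$ gives $\frac{1+t}{1-t}\le\frac{1-\eta}{1+\eta}\le 1-\eta$, whence $|Rg(\r,\s)|\le(1-\eta)|g(\r,-\s)|$.

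\emph{Boundedness on $L^p(\partial\D^\ell;|\sn|)$.} Finally I would confirm that $R$ maps the weighted space to itself: using $|Rg(\r,\s)|\le|g(\r,-\s)|$ and the symmetry $|\sn|=|(-\s)\cdot\n(\r)|$ of the weight under $\s\mapsto-\s$, a change of variables $\s\mapsto-\s$ on $\partial\D^\ell$ shows $\|Rg\|_{L^p(\partial\D^\ell;|\sn|)}\le\|g\|_{L^p(\partial\D^\ell;|\sn|)}$. I expect the only genuinely nonroutine step to be the geometric implication $\ell(\r,-\s)<\infty\Rightarrow\sn\le-\eta$ from (A4); everything else reduces to elementary estimates on the scalar factor $\frac{\sn+1}{\sn-1}$ and a parity change of variables in the weighted trace integral.
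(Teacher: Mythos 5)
Your proposal is correct and follows essentially the same route as the paper, whose proof consists of the single remark that all assertions follow directly from the definition \eqref{eq:R}; your write-up simply supplies the omitted verifications. In particular, your three key steps are all sound: the sign analysis giving $\left|\frac{t+1}{t-1}\right|=\frac{1+t}{1-t}\le 1$ for $t=\sn\in(-1,0)$, the contrapositive of (A4) applied at $(\r,-\s)$ yielding $\sn\le-\eta$ whenever $\ell(\r,-\s)<\infty$ (together with $\frac{1-\eta}{1+\eta}\le 1-\eta$), and the change of variables $\s\mapsto-\s$, which preserves the weight $|\sn|$, for the bound in $L^p(\partial\D^\ell;|\sn|)$.
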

\begin{proof}
The validity of the assertions follows directly from the definition \eqref{eq:R}. 
\end{proof}

\subsection{Well-posedness of the perturbed problem}

We will now show by a contraction argument that problem \eqref{eq:per1}--\eqref{eq:R} admits a unique solution. 
The key ingredient is that most particles that leave the domain $\R$ get absorbed before they arrive at the reflection boundary $\partial\R^\ell$. 
Moreover, points $(\r,\s) \in \D^\ell$ with $\ell(\r,\s)=\infty$ cannot be reached by particles originating from the domain $\R$.
Let us denote by
\begin{align}\label{eq:H}
H_-&=\{ h_-\in L^p(\partial \D^\ell;|\sn|): h_-(\r,\s)=0 \text{ if } \ell(\r,-\s)=\infty\}
\end{align}
the space of inflow boundary values at $\partial\D^\ell$ which corresponds to particles that may hit the 
computational domain $\R$ after travelling along straight lines through the extension layer $\R^\ell \setminus \R$. 
The following result is essential for our contraction argument.
\begin{lemma} \label{lem:z}
Let (A1)--(A4) hold. Then for any $q\! \in\! L^p(\D)$ and $h_- \!\!\in\!\! H_-$, the problem 
\begin{alignat}{4}
\sgrad \z^{\ell,\a} + \mu^{\ell,\a} \z^{\ell,\a} &= \K^\ell \z^{\ell,\a} + q^\ell &\quad& \text{in } \D^\ell,  \label{eq:aux1}\\
\z^{\ell,\a}_- &= h_- && \text{on } \partial\D^\ell, \label{eq:aux2}
\end{alignat}
has a unique solution $\z^{\ell,\a} \in W^p(\D^\ell)$ and $\z^{\ell,\a}_+(\r,\s)=0$ for a.e. point $(\r,\s) \in \partial\D^\ell$ with $\ell(\r,\s)=\infty$. Moreover, there holds
\begin{align*}
\|\z^{\ell,\a}\|_{W^p(\D)} 
\le C (\|q\|_{L^p(\D)} + e^{-\a \ell} \|h_-\|_{L^p(\partial\D^\ell;|\sn|)})
\end{align*}
with constant $C$ depending only on $\overline{\mu}$ and ${\rm diam}(\R)$.
In addition,
\begin{align*}
\|\z^{\ell,\a}_+\|_{L^p(\partial\D^\ell;|\sn|)}^p 
\le e^{-p\a \ell} \big(e^{-p\a\ell} \|h_-\|_{L^p(\partial\D^\ell;|\sn|)}^p + p \|q\|_{L^p(\D)} \|z^{\ell,\a}\|_{L^p(\D)}^{p-1} \big).
\end{align*}
\end{lemma}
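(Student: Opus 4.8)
The plan is to obtain existence, uniqueness and basic regularity of $\z^{\ell,\a}\in W^p(\D^\ell)$ directly from Theorem~\ref{thm:1a} applied on the extended domain, and then to reduce every quantitative assertion to the original domain $\D$ by transporting the solution along characteristics through the non-scattering, absorbing layer $\widetilde\R=\R^\ell\setminus\R$. The key structural observation is that, since $k^\ell=0$ and $q^\ell=0$ in the layer, the solution satisfies the homogeneous transport equation $\sgrad\z^{\ell,\a}+\a\z^{\ell,\a}=0$ there, so along any characteristic it is simply transported with exponential attenuation $e^{-\a t}$. All the exponential factors in the two estimates will come from this, combined with the lower bound $\ell(\r,\s)\ge\ell$ guaranteed by (A4).

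First I would establish that the outflow trace vanishes where $\ell(\r,\s)=\infty$. For such $(\r,\s)\in\partial\D^\ell_+$ the backward ray $t\mapsto\r-t\s$ stays inside $\R^\ell$ by convexity and never meets $\R$; it enters $\R^\ell$ at a unique inflow point $\r_0=\r-t_0\s\in\partial\R^\ell$, and transport along this segment gives $\z^{\ell,\a}_+(\r,\s)=e^{-\a t_0}h_-(\r_0,\s)$. Since the forward ray from $\r_0$ coincides with this segment and therefore also misses $\R$, one has $\ell(\r_0,-\s)=\infty$, whence $h_-(\r_0,\s)=0$ by the definition \eqref{eq:H} of $H_-$; thus $\z^{\ell,\a}_+(\r,\s)=0$. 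For the $W^p(\D)$ bound I would note that $\z^{\ell,\a}|_\D$ solves \eqref{eq:rte1a} in $\D$ with source $q$ and effective inflow trace $g_-=\z^{\ell,\a}_-|_{\partial\D_-}$. Tracing an inflow point $(\r^*,\s)\in\partial\D_-$ back along $-\s$ to its layer entry point $\r_0\in\partial\R^\ell$, the travelled distance equals $\ell(\r_0,-\s)\ge\ell$ by (A4), so $|g_-(\r^*,\s)|=e^{-\a\,\ell(\r_0,-\s)}|h_-(\r_0,\s)|\le e^{-\a\ell}|h_-(\r_0,\s)|$. A change of variables along characteristics for fixed $\s$, which preserves the weighted boundary measure $|\sn|\,dS\,d\s$ (both weighted surface elements equal the projection onto $\s^\perp$), then yields $\|g_-\|_{L^p(\partial\D;|\sn|)}\le e^{-\a\ell}\|h_-\|_{L^p(\partial\D^\ell;|\sn|)}$, since $h_-$ vanishes wherever $\ell(\r_0,-\s)=\infty$. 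The first estimate now follows from Theorem~\ref{thm:1a}.

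For the sharper outflow bound I would apply \eqref{eq:outoing-bound} on $\D$ (not $\D^\ell$) to $\z^{\ell,\a}|_\D$ and substitute $\sgrad\z^{\ell,\a}=-\mu\z^{\ell,\a}+\K\z^{\ell,\a}+q$, giving
\[
\|\z^{\ell,\a}_+\|_{L^p(\partial\D;|\sn|)}^p=\|g_-\|_{L^p(\partial\D;|\sn|)}^p+p(\K\z^{\ell,\a}-\mu\z^{\ell,\a}+q,\,|\z^{\ell,\a}|^{p-2}\z^{\ell,\a})_\D.
\]
The dissipativity $(\mu v-\K v,\,|v|^{p-2}v)_\D\ge 0$, which follows from (A3) by a Young inequality using the substochastic property $\int_\S k\,d\s'\le\mu$ and the symmetry of $k$ in $\s,\s'$, lets me discard those terms, and Hölder bounds the remaining source term by $p\|q\|_{L^p(\D)}\|\z^{\ell,\a}\|_{L^p(\D)}^{p-1}$. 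Finally, transporting $\z^{\ell,\a}_+$ outward from $\partial\R$ to $\partial\R^\ell$: each $(\r,\s)\in\partial\D^\ell_+$ with $\ell(\r,\s)<\infty$ originates at $\r^*=\r-\ell(\r,\s)\s\in\partial\R$ with decay $e^{-\a\,\ell(\r,\s)}\le e^{-\a\ell}$, and the same flux-preserving change of variables gives $\|\z^{\ell,\a}_+\|_{L^p(\partial\D^\ell;|\sn|)}^p\le e^{-p\a\ell}\|\z^{\ell,\a}_+\|_{L^p(\partial\D;|\sn|)}^p$. Inserting the two bounds on $\|g_-\|^p$ and the volume term produces the claimed double-decay estimate.

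The hard part will be the rigorous justification of the flux-preserving change of variables along characteristics and the attendant bookkeeping of which boundary pieces correspond under the shift map: namely, establishing the a.e.\ bijection between outflow points of $\partial\R^\ell$ with $\ell(\r,\s)<\infty$ and outflow points of $\partial\R$, and likewise on the inflow side, while correctly discarding the sets where $\ell(\r,\s)=\infty$ so that the contributions from $H_-$ and from the vanishing trace are consistently accounted for. By contrast, the exponential factors and the dissipativity estimate are routine once the transport-along-characteristics representation and the lower bound from (A4) are in place.
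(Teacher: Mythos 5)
Your proposal is correct, and its overall skeleton coincides with the paper's: restrict $\z^{\ell,\a}$ to $\D$, show that the effective inflow $g_-$ on $\partial\D$ carries a factor $e^{-\a\ell}$ relative to $h_-$, apply Theorem~\ref{thm:1a} for the $W^p(\D)$ bound, and obtain the outflow estimate from the identity \eqref{eq:outoing-bound} together with dissipativity of the collision terms and a second attenuating passage through the layer. The difference lies in how the layer estimates are implemented. The paper never argues along individual characteristics: it decomposes the solution on $\widetilde\D=\D^\ell\setminus\D$ as $\z^{\ell,\a}=\wz^\a+E^{\ell,\a}\z$ with $\z=\z^{\ell,\a}|_\D$, where $\wz^\a$ solves a pure transport problem in the layer with data $h_-$, and then recycles already-proven machinery — the pointwise domination $|\wz^\a|\le e^{-\a\ell}|\wz^0|$, the a-priori bound of Theorem~\ref{thm:1a} for $\wz^0$, and Lemma~\ref{lem:extension} — so that the flux conservation you obtain by your ``flux-preserving change of variables'' is instead derived from the Green-type identity \eqref{eq:outoing-bound} with $\sgrad\wz^0=0$. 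You replace this superposition argument by explicit transport representations ($\z^{\ell,\a}_+(\r,\s)=e^{-\a t_0}h_-(\r_0,\s)$, etc.) plus the measure-preserving sliding map along $\s$ (both weighted surface elements $|\sn|\,dS$ equaling the projection onto $\s^\perp$, injective per line by convexity); this is a standard fact in transport theory, so the step you flag as the ``hard part'' is sound, though it is exactly the technical justification the paper's route avoids. You also prove the dissipativity $(\mu v-\K v,|v|^{p-2}v)_\D\ge 0$ directly from (A3) via Young's inequality and the symmetry of the kernel, whereas the paper leaves this inside its citation of the $L^p$ well-posedness theory; your derivation of the precise form $e^{-p\a\ell}\bigl(e^{-p\a\ell}\|h_-\|^p+p\|q\|\,\|\z^{\ell,\a}\|^{p-1}\bigr)$ in fact spells out what the paper's terse final sentence (``deduced from the second estimate of Theorem~\ref{thm:1a} and Lemma~\ref{lem:extension}'') compresses. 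Net effect: your version is more self-contained and elementary but must justify the a.e.\ characteristic representation and the boundary bijections; the paper's is shorter because it reuses Lemma~\ref{lem:extension} and the trace identities. Your handling of the direction conventions ($\ell(\r_0,-\s)\ge\ell$ from (A4), and $\ell(\r_0,-\s)=\infty$ forcing $h_-(\r_0,\s)=0$ in the vanishing-trace argument) is exactly right, which is where such an argument most easily goes wrong.
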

\begin{proof}
Existence of a unique solution $z^{\ell,\a} \in W^p(\D^\ell)$ follows with the same arguments as in Theorem~\ref{thm:1a}.
Now let $\widetilde \D = \D^\ell \setminus \D$ denote the extension layer. 
Then due to the linearity of the problem, we can decompose $\z^{\ell,\a}$ on $\widetilde \D$ as
$\z^{\ell,\a} = \wz^\a + E^{\ell,\a} \z$, where $\z = \z^{\ell,\a}|_\D$ and $\wz^\a$ 
is the solution of the auxiliary problem 
\begin{align*}
\sgrad \wz^\a + \a \wz^a &= 0 \qquad \!\! \text{in } \widetilde \D, \\
\wz^\a_- &= h_-\quad \text{on } \partial\widetilde\D_- \cap \partial\D^\ell
\quad \text{and} \quad
\wz^\a_-= 0 \quad \text{on } \partial\widetilde\D_- \cap \partial\D.
\end{align*}
With similar arguments as in the proof of Lemma~\ref{lem:extension}, one can show that
\begin{align*}
\|\wz^\a\|_{L^p{(\partial\D;|\sn|)}}
&= \|\wz^\a_+\|_{L^p{(\partial\widetilde\D \cap \partial\D;|\sn|)}} \\
&\le e^{-\a \ell}\|\wz^0_+\|_{L^p(\partial\widetilde\D \cap \partial\D;|\sn|)}
 \le e^{-\a \ell}\|h_-\|_{L^p(\partial\D^\ell;|\sn|)}.
\end{align*}
In the last step, we used the a-priori estimate of Theorem~\ref{thm:1a} for the problem defining the solution $\wz^a$ with $a=0$.
From the decomposition $\z^{\ell,\a} = \wz^a + E^{\ell,\a} \z$, the definition of $\z=\z^{\ell,\a}|_\D$, 
and the continuity of $\z^{\ell,\a}$ across $\partial\D$, we deduce that
\begin{align*}
\z_- = \wz^\a_+ \qquad \text{on }  \partial\widetilde \D \cap \partial\D,
\end{align*}
i.e., the particles entering $\D$ via $\partial\D$ are those generated by $h_-$ on $\partial\D^\ell$ and leaving 
the surrounding layer $\widetilde\D = \D^\ell \setminus \D$ via $\partial\D$. 
The function $\z=\z^{\ell,\a}|_\D$ hence solves
\begin{alignat*}{4}
\sgrad \z + \mu \z &= \K \z + q &\quad& \text{in } \D, \\
\z_- &= g_-                    &&  \text{on } \partial\D,
\end{alignat*}
with boundary data $g_-=\wz^\a_+$. From Theorem~\ref{thm:1a} and the previous estimates, we get
\begin{align*}
\|\z^{\ell,\a}\|_{W^p(\D)} = \|\z\|_{W^p(\D)} \le C' (\|q\|_{L^p(\D)} + e^{-\a \ell}\|h_-\|_{L^p(\partial\D^\ell;|\sn|)}).   
\end{align*}
The additional bound for the outgoing trace $\z^{\ell,\a}_+=E^{\ell,a}z$ can then be deduced from the second estimate of Theorem~\ref{thm:1a}
and Lemma~\ref{lem:extension}. 
\end{proof}
We are now in the position to establish the well-posedness of problem~\eqref{eq:per1}--\eqref{eq:R}.
\begin{theorem}\label{thm:2}
Let (A1)--(A4) hold. Then, for any $q \in L^p(\D)$ and any $a > 0$, 
problem \eqref{eq:per1}--\eqref{eq:per2} has a unique solution $\w^{\ell,\a} \in W^p(\D^\ell)$ with
\begin{align*}
\|\w^{\ell,\a}\|_{W^p(\D)} \leq C \|q\|_{L^p(\D)} \quad\text{and}\quad\|\w^{\ell,\a}\|_{L^p(\partial\D^\ell)} \le C e^{-\a \ell} \|q\|_{L^p(\D)}
\end{align*}
with constant $C$ depending only on $\overline{\mu}$, ${\rm diam}(\R)$ and $\eta$.
\end{theorem}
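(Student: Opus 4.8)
The plan is to follow the route announced before Lemma~\ref{lem:z} and recast the reflection problem \eqref{eq:per1}--\eqref{eq:per2} as a fixed-point equation for the inflow trace on $\partial\D^\ell$. For $h_- \in H_-$ I let $\z^{\ell,\a}(h_-,q)$ denote the unique solution of the auxiliary problem \eqref{eq:aux1}--\eqref{eq:aux2} provided by Lemma~\ref{lem:z}, and I define the affine map $\T : H_- \to L^p(\partial\D^\ell;|\sn|)$ by $\T h_- = R\,\z^{\ell,\a}(h_-,q)_+$. Since the auxiliary problem is linear in its data, $\T h_- = L h_- + b$ with $L h_- = R\,\z^{\ell,\a}(h_-,0)_+$ and $b = R\,\z^{\ell,\a}(0,q)_+$. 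By construction $\wla$ solves \eqref{eq:per1}--\eqref{eq:per2} exactly when its inflow trace $h_- = \wla_-$ is a fixed point of $\T$ and $\wla = \z^{\ell,\a}(h_-,q)$, the identity $h_- = R\wla_+$ being precisely the reflection condition \eqref{eq:per2}.

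First I would verify that $\T$ maps the closed (hence complete) subspace $H_-$ into itself: by Lemma~\ref{lem:z} the outflow trace $\z^{\ell,\a}(\cdot)_+$ vanishes wherever $\ell(\r,\s)=\infty$, while by \eqref{eq:R} the value $(Rg)(\r,\s)$ is a multiple of $g_+(\r,-\s)$ supported on the inflow boundary; hence $\T h_-$ vanishes whenever $\ell(\r,-\s)=\infty$, i.e.\ $\T h_- \in H_-$ in the sense of \eqref{eq:H}. Next I would show that $L$ is a contraction with a constant independent of $a$ and $\ell$. Combining the trace estimate of Lemma~\ref{lem:z} for $q=0$, which gives $\|\z^{\ell,\a}(h_-,0)_+\|_{L^p(\partial\D^\ell;|\sn|)} \le e^{-2\a\ell}\|h_-\|_{L^p(\partial\D^\ell;|\sn|)}$, with the sharp bound of Lemma~\ref{lem:R}---applicable because $\z^{\ell,\a}(h_-,0)_+(\r,-\s)$ is nonzero only where $\ell(\r,-\s)<\infty$---and the measure-preserving substitution $\s\mapsto-\s$ in the weighted trace norm, I obtain $\|L h_-\| \le (1-\eta)\,e^{-2\a\ell}\|h_-\| \le (1-\eta)\|h_-\|$. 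As $\eta>0$, the factor $1-\eta<1$ is uniform in $a,\ell$, so $\T$ is a contraction on $H_-$ and Banach's theorem yields a unique fixed point $h_-^\ast$, hence a solution $\wla = \z^{\ell,\a}(h_-^\ast,q)\in W^p(\D^\ell)$.

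For the quantitative bounds I would estimate $b$ first: Lemma~\ref{lem:z} with $h_-=0$ gives $\|\z^{\ell,\a}(0,q)\|_{W^p(\D)}\le C\|q\|_{L^p(\D)}$ and, through its second trace estimate, $\|b\|\le\|\z^{\ell,\a}(0,q)_+\|_{L^p(\partial\D^\ell;|\sn|)} \le C e^{-\a\ell}\|q\|_{L^p(\D)}$. Writing $h_-^\ast = (I-L)^{-1}b$ with $\|(I-L)^{-1}\|\le \eta^{-1}$, I obtain $\|h_-^\ast\| \le C\eta^{-1} e^{-\a\ell}\|q\|_{L^p(\D)}$. Feeding $h_-^\ast$ back into Lemma~\ref{lem:z} then yields $\|\wla\|_{W^p(\D)}\le C(\|q\|_{L^p(\D)}+e^{-\a\ell}\|h_-^\ast\|)\le C\|q\|_{L^p(\D)}$ and a weighted-trace bound $\|\wla_\pm\|_{L^p(\partial\D^\ell;|\sn|)}\le C e^{-\a\ell}\|q\|_{L^p(\D)}$. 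Finally, since $\wla_- = h_-^\ast\in H_-$ and $\wla_+$ both vanish where $|\sn|\le\eta$ by (A4), the elementary inequality $\eta^{1/p}\|\cdot\|_{L^p(\partial\D^\ell)}\le\|\cdot\|_{L^p(\partial\D^\ell;|\sn|)}$ upgrades this to the unweighted estimate $\|\wla\|_{L^p(\partial\D^\ell)}\le C e^{-\a\ell}\|q\|_{L^p(\D)}$ claimed in the theorem.

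Uniqueness is contained in the uniqueness of the fixed point, but is seen most transparently by a dissipativity argument: the difference $\w$ of two solutions solves the homogeneous problem, and the $L^p$ integration-by-parts identity \eqref{eq:outoing-bound} applied on $\D^\ell$, together with the accretivity of $\mu^{\ell,\a}-\K^\ell$ under (A2)--(A3) and the bound $\|R\w_+\|\le\|\w_+\|$ from Lemma~\ref{lem:R}, forces $\a\|\w\|_{L^p(\D^\ell\setminus\D)}^p\le 0$; for $a>0$ this gives $\w\equiv 0$ in the layer, whence $\w_+=0$ on $\partial\D^\ell$, $\w_-=R\w_+=0$, and $\w\equiv0$ in $\R$ by Theorem~\ref{thm:1a}. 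This is the step where the hypothesis $a>0$ is essential. I expect the main obstacle to be obtaining stability constants that are \emph{uniform} in $a$ and $\ell$: relying on the exponential factor $e^{-2\a\ell}$ alone would leave a factor $(1-e^{-2\a\ell})^{-1}$ in $\|(I-L)^{-1}\|$ that blows up as $a\ell\to 0$, and the remedy is to exploit the geometric damping $1-\eta$ of the reflection operator, which forces one to track the support of the outflow trace carefully through the substitution $\s\mapsto-\s$. A secondary point is the passage from the weighted to the unweighted trace norm, which relies on the support property $\wla=0$ on $\{|\sn|\le\eta\}$ furnished by (A4).
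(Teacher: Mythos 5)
Your proposal is correct, and for existence and the quantitative bounds it follows essentially the paper's own route: the same fixed-point map $h_-\mapsto R\z_+$ on the closed set $H_-$ from \eqref{eq:H}, the same contraction constant $(1-\eta)e^{-2\a\ell}$ obtained by combining the trace estimate of Lemma~\ref{lem:z} with $q=0$, the $(1-\eta)$-damping of Lemma~\ref{lem:R} (licensed by the support property of $\z_+$), and the substitution $\s\mapsto-\s$, and the same $\eta^{1/p}$-device to pass from the weighted to the unweighted trace norm. Your affine splitting $\T h_-=Lh_-+b$ with the Neumann-series bound $\|(I-L)^{-1}\|\le\eta^{-1}$ is a repackaging of the paper's fixed-point-iteration estimate started at $h_-^0=0$ (your constant $\eta^{-1}$ is in fact the correct geometric-series bound; the factor $\frac{1}{1-\eta}$ in the paper's proof looks like a slip for $\frac{1}{\eta}$, harmless since $C$ may depend on $\eta$), and your bound for $b$ via Lemma~\ref{lem:z} with $h_-=0$ is the same as the paper's identification $\Phi(0)=R\u^{\ell,\a}_+$ together with \eqref{eq:decay}, since the auxiliary problem with $h_-=0$ is exactly \eqref{eq:ext1}--\eqref{eq:ext2}. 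Where you genuinely deviate is uniqueness, and here one caveat matters: your first clause, that uniqueness ``is contained in the uniqueness of the fixed point,'' is insufficient on its own, because the fixed-point argument only excludes multiple solutions whose inflow trace lies in $H_-$, while a priori a solution of \eqref{eq:per1}--\eqref{eq:per2} merely has $\w_-\in L^p(\partial\D^\ell;|\sn|)$. The paper closes exactly this gap as the first step of its proof by establishing the pointwise decay property \eqref{eq:star} — any solution vanishes along lines missing $\R$, via two passes through the purely absorbing layer and the reflection, using $\a>0$ — which forces the trace of every solution into $H_-$. Your dissipativity argument closes the same gap by a different and complete route: testing the homogeneous problem with $|\w|^{p-2}\w$ as in \eqref{eq:outoing-bound}, using accretivity of $\mu^{\ell,\a}-\K^\ell$ (standard under (A2)--(A3) with the symmetric kernel) and $\|R\w_+\|_{L^p(\partial\D^\ell;|\sn|)}\le\|\w_+\|_{L^p(\partial\D^\ell;|\sn|)}$ from Lemma~\ref{lem:R} with the $\s\mapsto-\s$ substitution, yields $\a\|\w\|_{L^p(\D^\ell\setminus\D)}^p\le 0$, hence $\w\equiv0$ in the layer, zero inflow on $\partial\D$ by trace continuity, and $\w\equiv0$ by Theorem~\ref{thm:1a}. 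This energy-type argument is arguably more robust — it does not follow individual characteristics and would survive any reflection operator with the mere contraction property of Lemma~\ref{lem:R} — whereas the paper's \eqref{eq:star} buys the stronger structural statement that solutions vanish identically on all lines avoiding $\R$, which makes the $H_-$-ansatz self-evidently exhaustive. Both arguments use $\a>0$ in the same essential way, and your identification of the uniform-in-$\a,\ell$ contraction factor $1-\eta$ as the crux matches the paper exactly.
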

\begin{proof}
In a first step, we show that for any solution $\wla \in W^p(\D^\ell)$ there holds
\begin{align} \label{eq:star}
\wla(\r,\s)=0 \qquad \text{for a.e. } (\r,\s) \in \partial\D^\ell \text{ with } \ell(\r,\s)=\ell(\r,-\s)=\infty,
\end{align}
which implies that the $\wla \equiv 0$ on the union of all lines that do not intersect the computational domain $\R$.
which implies that the $\wla \equiv 0$ on the union of all lines that do not intersect the computational domain $\R$; cf. Remark~\ref{rem:geometry}.
Let $(\r,\s)$ be such a point on the outer boundary $\partial\D^\ell_-$ with $\ell(\r,\s)=\ell(\r,-\s)=\infty$. 
Then $\r+t \s \in \R^\ell \setminus \R$ for all $0 < t < t^*$, where $t^*$ is chosen such that $\r^*=\r+t^*\s \in \partial\R^\ell$; see Figure~\ref{fig:sketch}. Since the medium in the extension layer $\R^\ell \setminus \R$ is purely absorbing, we have $\wla(\r+t\s,\s) = e^{-at} \wla(\r,\s)$. Applying the reflection operator at the point $(\r^*,\s)$, we further obtain 
\begin{align*}
|\wla(\r^*,-s)|=|R\wla(\r^*,-\s)| \le |\wla(r^*,\s)|=e^{-at^*} |\wla(\r,\s)|. 
\end{align*}
Repeating the argument with $\r = \r^*-t^*\s$ yields $|\wla(\r,\s)|\leq e^{-2a t^*} |\wla(\r,\s)|$, which implies that $\wla(\r,\s)=0$ and shows the assertion \eqref{eq:star}. 

As a consequence, we know that any solution $\wla \in W^p(\D)$ has to satisfy $\wla|_{\partial\D^\ell_-} = h_- \in H_-$ as in \eqref{eq:H}. 
We now show the existence and uniqueness of such a solution. 
For any given $h_- \in H_-$, we define $\Phi(h_-):=R \z_+$, where $\z\in W^p(\D^\ell)$ is the unique solution of 
\begin{alignat}{4}
\sgrad \z + \mu^{\ell,\a} \z &= \K^\ell z + q^\ell &\quad& \text{in } \D^\ell, \label{eq:auxil1}\\
\z_- &= h_- && \text{on } \partial\D^\ell. \label{eq:auxil2}
\end{alignat}
The results of Lemma~\ref{lem:z} imply that $z_+(\r,\s)=0$ for $(\r,\s) \in \partial\D^\ell$ with $\ell(\r,\s)=\infty$, 
and thus $Rz_+\in H_-$ by Lemma~\ref{lem:R}. 
Hence $\Phi:H_- \to H_-$ is a self-mapping on the non-empty and closed subset $H_-$ of the Banach-space $L^p(\partial\D^\ell;|\sn|)$.
By taking the difference of two solutions $z,z'$ with boundary data $h_-, h'_- \in H_-$, 
we further deduce from Lemma~\ref{lem:R} and Lemma~\ref{lem:z} that
\begin{align*}
\|\Phi(h_-) &- \Phi(h'_-)\|_{L^p(\partial\D^\ell;|\sn|)} =\|R z_+ - R z'_+\| \\
&\le (1-\eta) \| z_+ -z'_+\|_{L^p(\partial \D^\ell;|\sn|)} 
 \le  (1-\eta) e^{- 2\a \ell} \| h_- - h'_-\|_{L^p(\partial \D;|\sn|)}.
\end{align*}
This shows that $\Phi$ is a contraction on $H_-$ and by Banach's fixed-point theorem, 
there exists a unique fixed point $h_-\in H_-$ with $\Phi(h_-)=h_-$.
By construction, the function $w^{\ell,\a}=\z$ with $\z$ as defined above then is the unique solution of
\eqref{eq:per1}--\eqref{eq:per2}.
Now set $h_-^0=0$ and for $n\ge 1$ define $h_-^n = \Phi(h_-^{n-1})$. 
Then from the convergence estimates for Banach's fixed-point iteration, we obtain
\begin{align*}
\|h_-\|_{L^p(\partial\D^\ell;|\sn|)} 
&=\|h_--h_-^0\|_{L^p(\partial\D^\ell;|\sn|)} 
\le \frac{1}{1-\eta} \|h_-^1-h_-^0\|_{L^p(\partial\D^\ell;|\sn|)}.
\end{align*}
Due to the choice $h_-^0=0$, we know that $h_-^1 = R \ula$, where $\ula$ is the unique solution of \eqref{eq:ext1}--\eqref{eq:ext2}. 
From Lemma~\ref{lem:R} and the estimate \eqref{eq:decay}, we can then deduce that
\begin{align*}
\eta^{1/p}\|h_-\|_{L^p(\partial\D^\ell)} & \le \|h_-\|_{L^p(\partial\D^\ell;|\sn|)} \\ 
& \le \frac{1}{1-\eta} \|\ula_+\|_{L^p(\partial\D^\ell;|\sn|)} \le  \frac{C}{1-\eta} e^{-a\ell}\|q\|_{L^p(\D)}.
\end{align*}
From the construction of $h_-$, one can see that the solution $w^{\ell,\a}=z$  of the auxiliary problem \eqref{eq:auxil1}--\eqref{eq:auxil2} is the unique solution of problem \eqref{eq:per1}--\eqref{eq:per2}. The proof is thus completed by an application of Lemma~\ref{lem:z}, which yields the bounds for the solution. 
\end{proof}

\subsection{Error estimates}\label{sec:error_estimates}

In preparation of the next theorem, let us state a bound for the solution $\wla$ of 
the perturbed problem on the extension layer.
\begin{lemma}\label{lem:cor1}
Let (A1)--(A4) hold and $w^{\ell,\a}$ be the solution of \eqref{eq:per1}--\eqref{eq:per2}. Then 
\begin{align*}
\|\sgrad w^{\ell,\a}\|_{L^p(\D^\ell\setminus\D)} + \a \|w^{\ell,a}\|_{L^p(\D^\ell\setminus\D)} \leq C \a^{\frac{p-1}{p}} \|q\|_{L^p(\D)},
\end{align*}
with constant $C$ depending only on $\bar \mu$, ${\rm diam}(\R)$, and $\eta$. \end{lemma}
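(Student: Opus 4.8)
The plan is to reduce the desired estimate on the layer $\D^\ell\setminus\D$ to the analysis already carried out in Lemma~\ref{lem:extension} and Lemma~\ref{lem:z}. The key observation is that, by the construction in the proof of Theorem~\ref{thm:2}, the solution $w^{\ell,\a}$ coincides with the solution $z^{\ell,\a}$ of the auxiliary problem \eqref{eq:auxil1}--\eqref{eq:auxil2} for the particular fixed-point boundary datum $h_-\in H_-$. On the extension layer $\widetilde\D=\D^\ell\setminus\D$, the scattering kernel $k^\ell$ and source $q^\ell$ both vanish and $\mu^{\ell,\a}=\a$, so $w^{\ell,\a}$ satisfies there the pure transport equation $\sgrad w^{\ell,\a}+\a\, w^{\ell,\a}=0$. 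This gives immediately the identity $\|\sgrad w^{\ell,\a}\|_{L^p(\widetilde\D)}=\a\|w^{\ell,\a}\|_{L^p(\widetilde\D)}$, so it suffices to bound the single quantity $\a\|w^{\ell,\a}\|_{L^p(\widetilde\D)}$ and then double it.

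First I would invoke the decomposition used in Lemma~\ref{lem:z}, writing $w^{\ell,\a}|_{\widetilde\D}=\ww^\a+E^{\ell,\a}w$, where $w=w^{\ell,\a}|_\D$ and $\ww^\a$ solves the homogeneous transport problem on $\widetilde\D$ with inflow datum $h_-$ on $\partial\widetilde\D\cap\partial\D^\ell$ and zero on $\partial\widetilde\D\cap\partial\D$. For the extension part, Lemma~\ref{lem:extension} (applied with $\D$ replaced where appropriate and using $u_-=0$) yields $\a\|E^{\ell,\a}w\|_{L^p(\widetilde\D)}\le \a^{1-1/p}\|w\|_{W^p(\D)}$, and then Theorem~\ref{thm:2} supplies $\|w\|_{W^p(\D)}\le C\|q\|_{L^p(\D)}$; this already produces a term of the claimed form $C\,\a^{(p-1)/p}\|q\|_{L^p(\D)}$. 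For the homogeneous part $\ww^\a$, the same computation as in Lemma~\ref{lem:extension}/Lemma~\ref{lem:z} gives $\a\|\ww^\a\|_{L^p(\widetilde\D)}\le \a^{1-1/p}\|h_-\|_{L^p(\partial\D^\ell;|\sn|)}$, and the fixed-point bound from Theorem~\ref{thm:2} gives $\|h_-\|_{L^p(\partial\D^\ell;|\sn|)}\le \tfrac{C}{1-\eta}e^{-\a\ell}\|q\|_{L^p(\D)}\le C'\|q\|_{L^p(\D)}$. Combining the two contributions and using $e^{-\a\ell}\le 1$ yields $\a\|w^{\ell,\a}\|_{L^p(\widetilde\D)}\le C\,\a^{(p-1)/p}\|q\|_{L^p(\D)}$ with $C$ depending only on $\bar\mu$, ${\rm diam}(\R)$, and $\eta$.

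Finally I would assemble the estimate: using $\|\sgrad w^{\ell,\a}\|_{L^p(\widetilde\D)}=\a\|w^{\ell,\a}\|_{L^p(\widetilde\D)}$ together with the bound just obtained gives
\begin{align*}
\|\sgrad w^{\ell,\a}\|_{L^p(\D^\ell\setminus\D)}+\a\|w^{\ell,\a}\|_{L^p(\D^\ell\setminus\D)}=2\a\|w^{\ell,\a}\|_{L^p(\D^\ell\setminus\D)}\le C\,\a^{\frac{p-1}{p}}\|q\|_{L^p(\D)},
\end{align*}
after absorbing the factor $2$ into $C$, which is the claim.

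The main obstacle I anticipate is bookkeeping of the trace normalizations and the weights $|\sn|$: one must be careful that the normal vector pointing out of $\widetilde\R$ is used consistently (as flagged in Lemma~\ref{lem:extension}) and that the passage from the weighted boundary norm $\|h_-\|_{L^p(\partial\D^\ell;|\sn|)}$ to the unweighted control and back does not lose the $\eta$-dependence incorrectly. Beyond that, the result is essentially a corollary of the already-proved lemmas, so the heavy analytic work — existence, the exponential decay estimate \eqref{eq:decay}, and the sharp identity $\|\sgrad\ww\|_{L^p}=\a\|\ww\|_{L^p}$ on purely absorbing layers — is inherited rather than redone.
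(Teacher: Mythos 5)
Your proof is correct and takes essentially the same route as the paper: the paper likewise observes that $w^{\ell,\a}$ solves the pure transport equation $\sgrad w^{\ell,\a}+\a w^{\ell,\a}=0$ on $\widetilde\D=\D^\ell\setminus\D$, with inflow data $(w^{\ell,\a}|_{\D})_+$ on $\partial\widetilde\D\cap\partial\D$ and $Rw^{\ell,\a}$ on $\partial\widetilde\D\cap\partial\D^\ell$, bounds both data terms by $C\|q\|_{L^p(\D)}$ via Theorem~\ref{thm:2}, and then concludes ``with the same arguments as in the proof of Lemma~\ref{lem:extension}''; your splitting $w^{\ell,\a}|_{\widetilde\D}=\ww^\a+E^{\ell,\a}w$ is simply the linear superposition of these two data contributions, i.e., the same estimate packaged through the decomposition of Lemma~\ref{lem:z}.

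One caveat worth fixing: you cannot invoke Lemma~\ref{lem:extension} verbatim for $w=w^{\ell,\a}|_{\D}$, because its hypothesis $w_-=0$ on $\partial\D$ fails here --- the inflow trace of $w$ on $\partial\D$ equals $\ww^\a_+$, which is nonzero in general. What you actually need is the intermediate estimate from that proof, $\a\|E^{\ell,\a}w\|_{L^p(\widetilde\D)}\le \a^{1-1/p}\|w_+\|_{L^p(\partial\D;|\sn|)}$, which holds without the vanishing-inflow hypothesis, together with \eqref{eq:outoing-bound}, which now carries the extra term $\|w_-\|_{L^p(\partial\D;|\sn|)}=\|\ww^\a_+\|_{L^p(\partial\D;|\sn|)}\le e^{-\a\ell}\|h_-\|_{L^p(\partial\D^\ell;|\sn|)}$; this is harmless by your own fixed-point bound on $h_-$ and yields $\|w_+\|_{L^p(\partial\D;|\sn|)}\le C\|q\|_{L^p(\D)}$, which is precisely the trace bound the paper states via Theorem~\ref{thm:2}. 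With that one-line repair your argument is complete and coincides in substance with the paper's proof.
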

\begin{proof}
Observe that $w^{\ell,a}$ is a solution to
\begin{alignat*}{4}
\sgrad w^{\ell,a} + \a w^{\ell,a} &= 0  &\quad &\text{in } \widetilde \D, \\
w^{\ell,a}_- &=  (w^{\ell,a}|_{\D})_+  &\quad &\text{on } \partial\widetilde\D \cap \partial\D,\\
w^{\ell,a}_- &= R w^{\ell,a}  &\quad & \text{on } \partial\widetilde\D \cap \partial\D^\ell.
\end{alignat*}
In view of Theorem~\ref{thm:2}, we already know that 
\begin{align*}
\|w^{\ell,a}_+\|_{L^p(\partial\D;|\sn|)}&\leq C \|w^{\ell,a}\|_{W^p(\D)}\leq C \|q\|_{L^p(\D)}, \qquad \text{and}\\
\|R w^{\ell,a}\|_{L^p(\D^\ell;|\sn|)} &\leq \| w^{\ell,a}\|_{L^p(\D^\ell;|\sn|)}\leq C e^{-a\ell}\|q\|_{L^p(\D)}.
\end{align*}	
The assertion now follows with the same arguments as in the proof of Lemma~\ref{lem:extension}.
\end{proof}
In combination with the previous results, we can now derive explicit estimates for the perturbation error 
resulting from the use of the reflection boundary condition. 
\begin{theorem} \label{thm:2a}
Let (A1)--(A4) hold and let $u$ and $w^{\ell,\a}$ denote the solutions of problem \eqref{eq:rte1}--\eqref{eq:rte2} and of problem \eqref{eq:per1}--\eqref{eq:per2}, respectively. Then 
\begin{align*}
\|w^{\ell,\a} - \u\|_{W^p(\D)}\le C e^{-2\a \ell} \|q\|_{L^p(\D)},
\end{align*}
with constant $C$ depending only on $\bar \mu$, ${\rm diam}(\R)$, and $\eta$. 
Moreover,
\begin{align*}
\|\sgrad (w^{\ell,\a} - E^{\ell,a}\u)\|_{L^p(\D^\ell\setminus\D)} + a\|(w^{\ell,\a} - E^{\ell,a}\u)\|_{L^p(\D^\ell\setminus\D)} \leq C a^{\frac{p-1}{p}} e^{-a\ell} \|q\|_{L^p(\D)}.
\end{align*}
\end{theorem}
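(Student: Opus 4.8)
The plan is to reduce everything to the single error function $e := \wla - E^{\ell,\a}\u = \wla - \ula$, exploiting that $\ula = E^{\ell,\a}\u$ with $\ula|_\D = \u$ by Theorem~\ref{thm:1}. Since $\wla$ and $\ula$ solve \eqref{eq:per1} and \eqref{eq:ext1} with the \emph{same} interior data $\mu^{\ell,\a}$, $\K^\ell$, $q^\ell$, their difference satisfies the homogeneous transport equation $\sgrad e + \mu^{\ell,\a} e = \K^\ell e$ on $\D^\ell$. Only the boundary data differ: from $\ula_-=0$ on $\partial\D^\ell$ and the representation $\wla=\z$ with $\z_-=h_-$ constructed in the proof of Theorem~\ref{thm:2}, one reads off $e_- = h_-$, where $h_-\in H_-$ is exactly the fixed point produced there. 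Hence $e$ is an instance of the auxiliary problem covered by Lemma~\ref{lem:z} with vanishing source $q=0$ and inflow data $h_-$.

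For the first estimate I would apply Lemma~\ref{lem:z} directly to $e$. With $q=0$ it yields $\|e\|_{W^p(\D)}\le C e^{-\a\ell}\|h_-\|_{L^p(\partial\D^\ell;|\sn|)}$, and it then only remains to insert the bound on the fixed-point data already obtained inside the proof of Theorem~\ref{thm:2}, namely $\|h_-\|_{L^p(\partial\D^\ell;|\sn|)}\le\frac{C}{1-\eta}e^{-\a\ell}\|q\|_{L^p(\D)}$. Multiplying the two exponential factors gives $\|e\|_{W^p(\D)}\le C e^{-2\a\ell}\|q\|_{L^p(\D)}$, and since $e|_\D=\wla-\u$ on $\D$ this is the claimed bound. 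The only genuinely non-obvious point is the appearance of the \emph{quadratic} factor $e^{-2\a\ell}$: one power of $e^{-\a\ell}$ measures how small the reflected flux $h_-$ is at the outer boundary (Theorem~\ref{thm:2}), while the second power is the additional damping this flux experiences as it is transported back across the absorbing layer of thickness $\ge\ell$ before it can re-enter $\R$ (the transport factor in Lemma~\ref{lem:z}).

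For the second estimate I would repeat the layer argument of Lemma~\ref{lem:cor1}, now applied to $e$. On $\widetilde\D=\D^\ell\setminus\D$ there is neither scattering nor source, so $e$ solves the pure-absorption problem $\sgrad e + \a e = 0$ with inflow $(e|_\D)_+$ on $\partial\widetilde\D\cap\partial\D$ and $e_-=h_-$ on $\partial\widetilde\D\cap\partial\D^\ell$. As in the proofs of Lemma~\ref{lem:extension} and Lemma~\ref{lem:cor1}, the identity $\|\sgrad e\|_{L^p(\widetilde\D)}=\a\|e\|_{L^p(\widetilde\D)}$ combined with the pure-absorption trace bound gives $\a\|e\|_{L^p(\widetilde\D)}\le C\a^{1-\frac1p}\|e_-\|_{L^p(\partial\widetilde\D;|\sn|)}$. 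The two pieces of the inflow trace are controlled by $\|h_-\|_{L^p(\partial\D^\ell;|\sn|)}=O(e^{-\a\ell})\|q\|$ as above and, using the outgoing-trace bound \eqref{eq:outoing-bound} together with the first estimate, $\|(e|_\D)_+\|_{L^p(\partial\D;|\sn|)}=O(e^{-2\a\ell})\|q\|$. Thus $\|e_-\|_{L^p(\partial\widetilde\D;|\sn|)}\le C e^{-\a\ell}\|q\|_{L^p(\D)}$, and the claimed bound $\|\sgrad e\|_{L^p(\widetilde\D)}+\a\|e\|_{L^p(\widetilde\D)}\le C\a^{\frac{p-1}{p}}e^{-\a\ell}\|q\|_{L^p(\D)}$ follows.

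I expect the principal difficulty to be bookkeeping rather than a new idea: verifying that $e$ genuinely inherits the inflow data $h_-\in H_-$ (with the correct orientation of the normal on the inner versus the outer boundary, as flagged in the earlier proofs), and confirming that Lemma~\ref{lem:z} and the layer estimates apply with constants depending only on $\bar\mu$, ${\rm diam}(\R)$ and $\eta$, independently of $\a$ and $\ell$. Once the reduction to $e$ and the identification $e_-=h_-$ are secured, both estimates drop out as immediate combinations of Theorem~\ref{thm:2}, Lemma~\ref{lem:z}, and the pure-absorption layer bound.
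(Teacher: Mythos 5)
Your proposal is correct and follows essentially the same route as the paper's proof: the paper likewise considers the difference $\z^{\ell,\a}=\w^{\ell,\a}-u^{\ell,\a}$, observes that it solves \eqref{eq:aux1}--\eqref{eq:aux2} with $q^\ell=0$ and inflow data $h_-=R\w^{\ell,\a}_+$ (which is exactly your fixed point from Theorem~\ref{thm:2}), and obtains the first bound by combining Lemma~\ref{lem:R}, Lemma~\ref{lem:z}, and Theorem~\ref{thm:2}, and the second by the layer argument of Lemma~\ref{lem:cor1}. Your write-up simply makes the paper's terse ``follows similarly'' explicit, with the correct bookkeeping of the two exponential factors and of the two inflow contributions on $\partial\widetilde\D$.
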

\begin{proof}
By Theorem~\ref{thm:1}, we have $\u=\u^{\ell,\a}|_\D$, where $\u^{\ell,\a}$ is the solution of 
\eqref{eq:ext1}--\eqref{eq:ext2}. The difference $\z^{\ell,\a}=\w^{\ell,\a}-u^{\ell,\a}$ satisfies \eqref{eq:aux1}--\eqref{eq:aux2} with $h_- = R w^{\ell,\a}$ and $q^\ell=0$. The first bound then follows by a combination of Lemma~\ref{lem:R}, Lemma~\ref{lem:z}, and Theorem~\ref{thm:2}, and the second estimate follows similarly using Lemma~\ref{lem:cor1}.
\end{proof}
\bigskip 

\begin{center}
\sc Part 2: Numerical approximation
\end{center}

\medskip 

In the following two sections, we discuss the numerical approximation of problem~\eqref{eq:per1}--\eqref{eq:per2}
by extending the mixed variational approach proposed in \cite{EggerSchlottbom12}. 
We first derive a variational formulation of the problem and consider
its systematic Galerkin approximation, and then discuss a particular method
based on a tensor product approximation using spherical harmonics and mixed finite elements. 
\section{A mixed variational problem} \label{sec:variational}
For ease of notation, we write 
$w=w^{\ell,a}$ and $q=q^\ell$ in the following and consider the problem
\begin{alignat}{4}
\sgrad \w + \mu^{\ell,a} \w &= \K^\ell \w + q &\quad & \text{in } \D^\ell, \label{eq:sys1}\\
\w_- &= R \w_+ && \text{on } \partial\D^\ell.  \label{eq:sys2}
\end{alignat}
As before, the reflection operator is defined by $(R g)(\r,\s) = \frac{\sn+1}{\sn-1} g_+(\r,-\s)$,
and the particular form will become important now. 
Based on the derivation of the perturbed problem, we know that $q \equiv 0$ and $\k^\ell \equiv 0$ in the extension layer $\partial\R^\ell \setminus \R$.
\subsection{Even-odd splitting}
Following \cite{EggerSchlottbom12}, we start with splitting functions $v(\r,\s)$ into even and odd parts with respect to direction $\s$ defined by 
\begin{align} \label{eq:split}
v^\pm(\r,\s) = \frac{1}{2} \left( v(\r,\s) \pm \v(\r,-\s)\right).
\end{align}
Let us note that the splitting $v=v^++v^-$ is orthogonal with respect to the scalar product of $L^2(\S)$. This allows us to rewrite the problem \eqref{eq:sys1}--\eqref{eq:sys2} as follows.
\begin{lemma}
Let $w \in W^2(\D^\ell)$ denote a solution of problem \eqref{eq:sys1}--\eqref{eq:sys2}. Then 
\begin{alignat}{4}
\sgrad w^- + \mu^{\ell,a} w^+ &= \K^\ell w^+  + q^+ &\quad& \text{in } \D^\ell, \label{eq:split1}\\
\sgrad w^+ + \mu^{\ell,a} w^- &= \K^\ell w^-  + q^- &\quad& \text{in } \D^\ell, \label{eq:split2}\\
w^+ &= \sn w^- && \text{on } \partial\D. \label{eq:split3}
\end{alignat}
If, on the other hand, $w^\pm \in W^2(\D^\ell)$ solve \eqref{eq:split1}--\eqref{eq:split3}, then $w=w^++w^- \in W^2(\D^\ell)$ is a solution of \eqref{eq:sys1}--\eqref{eq:sys2}. 
The two problems are thus equivalent in this sense.
\end{lemma}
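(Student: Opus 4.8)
The plan is to exploit the behaviour of the three operators in \eqref{eq:sys1} under the antipodal reflection $\s\mapsto-\s$. Writing $P$ for the parity operator $(Pv)(\r,\s)=v(\r,-\s)$, so that the even and odd parts are $w^+=\tfrac12(w+Pw)$ and $w^-=\tfrac12(w-Pw)$, the building blocks behave as follows. The transport operator \emph{anticommutes} with $P$, i.e.\ $\sgrad(Pv)=-P(\sgrad v)$, so it maps even functions to odd ones and vice versa. Multiplication by $\mu^{\ell,a}$, which depends on $\r$ only, commutes with $P$; and $\K^\ell$ commutes with $P$ as well, because its kernel depends on $\s,\s'$ only through $\s\cdot\s'$ and the substitution $\s'\mapsto-\s'$ leaves $\int_\S$ invariant. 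A first consequence worth recording is that $w\in W^2(\D^\ell)$ forces $w^\pm\in W^2(\D^\ell)$, since $\sgrad(Pw)=-P(\sgrad w)\in L^2(\D^\ell)$, so the even--odd parts live in the right space.

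Given these parities, I would split \eqref{eq:sys1} by applying the projections $\tfrac12(I\pm P)$. In $\sgrad w+\mu^{\ell,a}w=\K^\ell w+q$ the even part of $\sgrad w$ equals $\sgrad w^-$, while $\mu^{\ell,a}w$, $\K^\ell w$ and $q$ split according to their own parities; matching even parts yields \eqref{eq:split1} and matching odd parts yields \eqref{eq:split2}. The converse is immediate: adding \eqref{eq:split1} and \eqref{eq:split2} recovers $\sgrad w+\mu^{\ell,a}w=\K^\ell w+q$ for $w=w^++w^-$, so the interior equations are equivalent term by term. This part is essentially bookkeeping.

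The crux is the boundary condition. I would evaluate the reflection condition \eqref{eq:sys2} at an inflow point $(\r,\s)\in\partial\D^\ell_-$ together with its antipode $(\r,-\s)\in\partial\D^\ell_+$, using $w(\r,\s)=w^+(\r,\s)+w^-(\r,\s)$ and $w(\r,-\s)=w^+(\r,\s)-w^-(\r,\s)$. Since $\w_-=R\w_+$ reads $w(\r,\s)=\tfrac{\sn+1}{\sn-1}\,w(\r,-\s)$ at such a point, substituting the decomposition and clearing the denominator collapses, after cancellation, to the single relation $w^+=\sn\,w^-$; by the parities of $w^\pm$ the same relation then holds at the outflow antipode, so it is valid a.e.\ on all of $\partial\D^\ell$, giving \eqref{eq:split3}. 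Running the algebra backwards shows that $w^+=\sn w^-$ together with $w=w^++w^-$ reproduces $\w_-=R\w_+$, which closes the equivalence. The main obstacle I anticipate is precisely this boundary computation: one must check that the M\"obius factor $\tfrac{\sn+1}{\sn-1}$ is harmless (its denominator vanishes only on the null set $\sn=1$), that the even--odd splitting commutes with taking traces in $L^2(\partial\D^\ell;|\sn|)$, and that the in- and outflow traces recombine correctly, so that the derived condition genuinely holds on the whole boundary and not merely on the inflow part.
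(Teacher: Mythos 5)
Your proposal is correct and follows essentially the same route as the paper: parity bookkeeping (multiplication by $\mu^{\ell,a}$ and $\K^\ell$ preserve parity, $\sgrad$ reverses it) for the interior equations, and clearing the denominator in $w(\r,\s)=\frac{\sn+1}{\sn-1}w(\r,-\s)$ at inflow points to obtain $w^+=\sn\,w^-$, extended to the outflow part by observing that both sides are even in $\s$. Your extra remarks (that $w^\pm\in W^2(\D^\ell)$, and that the denominator is harmless since $\sn<0$ on $\partial\D^\ell_-$) are sound but not needed beyond what the paper records.
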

\begin{proof}
Let us note that multiplication with $\mu$ and application of $\K^\ell$ preserves parity, i.e., these operations map even to even and odd to odd functions, while application of 
$\sgrad$ reverts the parity. Together with the orthogonality of the splitting \eqref{eq:split} this already shows the equivalence of \eqref{eq:sys1} and \eqref{eq:split1}--\eqref{eq:split2}. 
Using the definition of the reflection operator, the boundary condition \eqref{eq:sys2} can be rewritten as
\begin{align*}
(1-\sn ) w(\r,\s) = -(1+\sn) w(\r,-\s) \qquad \text{for } (\r,\s) \in \partial\D^\ell_-.
\end{align*}
A reordering of the terms and inserting the definition of $w^\pm$ further yields 
\begin{align*}
2 w^+(\r,\s) 
&=w(\r,\s)+w(\r,-\s) \\
&= \sn [w(\r,\s)-w(\r,-\s)] = 2 \sn w^{-}(\r,\s) \qquad \text{for } (\r,\s) \in \partial\D^\ell_-,
\end{align*}
which shows that \eqref{eq:split3} is valid on $\partial\D_-$. 
Now note that the left and right hand side of the last identity each define even functions of $s$. This shows that \eqref{eq:split3} also holds on $\partial\D^\ell_+$. The equivalence of \eqref{eq:sys2} with \eqref{eq:split3} follows by reverting the arguments.
\end{proof}
\subsection{Variational characterization}
We can now use the equivalent formulation \eqref{eq:split1}--\eqref{eq:split3} to derive a weak 
form of problem \eqref{eq:sys1}--\eqref{eq:sys2}. The function spaces
\begin{align*}
\WW^+ = \{u^+ \in W^2(\D^\ell) : u^+|_{\partial\D^\ell} \in L^2(\partial\D^\ell)\} 
\qquad \text{and} \qquad
\VV^\pm = \{u^\pm \in L^2(\D^\ell)\}
\end{align*}
turn out to be appropriate for representing the even and odd solution components of the problem under investigation.
The tensor product space $\WW^+ \times \VV^-$ is equipped with its natural norm given by
\begin{align*}
\tnorm{(u^+,u^-)}^2 
= \|\sgrad u^+\|^2_{L^2(\D^\ell)} + \|u^+\|_{L^2(\partial\D^\ell)} + \|u^+\|^2_{L^2(\D^\ell)}+ \|u^-\|^2_{L^2(\D^\ell)}. 
\end{align*}
For ease of notation, we further define the total collision operator 
\begin{align*}
\C : L^2(\D^\ell) \to L^2(\D^\ell), \quad v \mapsto \mu^{\ell,a} v - \K^\ell v. 
\end{align*}
We then obtain the following variational characterization of solutions.
\begin{lemma} \label{lem:variational}
Let $w \in W^2(\D^\ell)$ denote a solution of problem \eqref{eq:sys1}--\eqref{eq:sys2} or, equivalently, of problem \eqref{eq:split1}--\eqref{eq:split3}. Then for all $v^+ \in \WW^+$ and $v^- \in \VV^-$ there holds 
\begin{align}
(w^+,v^+)_{\partial\D^\ell} + (\C w^+,v^+)_{\D^\ell} - (w^-, \sgrad v^+)_{\D^\ell} &= (q^+,v^+)_{\D^\ell}, \label{eq:var1} \\
(\sgrad w^+,v^-)_{\D^\ell} + (\C w^-,v^-)_{\D^\ell} &= (q^-,v^-)_{\D^\ell}.\label{eq:var2}
\end{align}
\end{lemma}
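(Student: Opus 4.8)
The goal is to derive the two variational identities \eqref{eq:var1}--\eqref{eq:var2} starting from the strong form \eqref{eq:split1}--\eqref{eq:split3}. The natural strategy is to test each of the two interior equations with an appropriate test function and integrate over $\D^\ell$, using the integration-by-parts formula \eqref{eq:ipp} to move the transport operator $\sgrad$ off the solution where needed, and then to handle the resulting boundary term using the boundary condition \eqref{eq:split3}.

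\medskip

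First I would take the odd equation \eqref{eq:split2}, namely $\sgrad w^+ + \mu^{\ell,a} w^- = \K^\ell w^- + q^-$, and simply test it with $v^- \in \VV^-$ in the $L^2(\D^\ell)$ inner product. Since $v^-$ lives only in $L^2(\D^\ell)$ with no differentiability, no integration by parts is available or needed here; recalling the definition $\C v = \mu^{\ell,a} v - \K^\ell v$, the terms regroup directly into $(\sgrad w^+, v^-)_{\D^\ell} + (\C w^-, v^-)_{\D^\ell} = (q^-, v^-)_{\D^\ell}$, which is exactly \eqref{eq:var2}. This is the easy half.

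\medskip

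The more delicate identity is \eqref{eq:var1}. Here I would start from the even equation \eqref{eq:split1}, $\sgrad w^- + \mu^{\ell,a} w^+ = \K^\ell w^+ + q^+$, test with $v^+ \in \WW^+$, and then integrate by parts on the first term via \eqref{eq:ipp}, writing $(\sgrad w^-, v^+)_{\D^\ell} = -(w^-, \sgrad v^+)_{\D^\ell} + (\sn\, w^-, v^+)_{\partial\D^\ell}$. This produces the desired interior terms $-(w^-,\sgrad v^+)_{\D^\ell} + (\C w^+,v^+)_{\D^\ell}$ on the left and $(q^+,v^+)_{\D^\ell}$ on the right, together with the boundary contribution $(\sn\, w^-, v^+)_{\partial\D^\ell}$. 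The key step, which is the main obstacle, is to show that this boundary term equals the term $(w^+, v^+)_{\partial\D^\ell}$ appearing in \eqref{eq:var1}. This is precisely where the boundary condition \eqref{eq:split3}, $w^+ = \sn\, w^-$ on $\partial\D^\ell$, must be invoked: substituting $\sn\, w^- = w^+$ identifies $(\sn\, w^-, v^+)_{\partial\D^\ell}$ with $(w^+, v^+)_{\partial\D^\ell}$, and \eqref{eq:var1} follows.

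\medskip

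The point requiring care is justifying that the integration-by-parts formula \eqref{eq:ipp} and the boundary trace identity \eqref{eq:split3} are legitimately applicable: one needs $w^+ \in \WW^+$ (so that its trace lies in $L^2(\partial\D^\ell)$, not merely in the weighted space), $w^- \in W^2(\D^\ell)$ (so that $w^-$ has a well-defined trace on $\partial\D^\ell$ and the boundary integral $(\sn\, w^-, v^+)_{\partial\D^\ell}$ makes sense), and $v^+ \in \WW^+$. These regularity properties are guaranteed by the hypothesis $w \in W^2(\D^\ell)$ together with the even--odd splitting, and by the definition of the spaces $\WW^+$ and $\VV^-$; once these memberships are noted, the boundary term is absolutely integrable and the substitution via \eqref{eq:split3} is valid. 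No estimates are required — the lemma is an exact algebraic identity once the integration by parts is carried out — so beyond verifying these trace regularity prerequisites the remaining manipulations are routine.
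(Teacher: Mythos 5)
Your proposal is correct and follows essentially the same route as the paper: test the odd equation \eqref{eq:split2} directly with $v^-$ to get \eqref{eq:var2}, test the even equation \eqref{eq:split1} with $v^+$, integrate by parts via \eqref{eq:ipp}, and replace the boundary term $(\sn\, w^-, v^+)_{\partial\D^\ell}$ by $(w^+, v^+)_{\partial\D^\ell}$ using \eqref{eq:split3}. Your added remarks on trace regularity are a sensible (if brief in the paper) justification of the same steps.
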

\begin{proof}
Recall that $(u,v)_M = \int_M u v dM$ denotes the scalar product of $L^2(M)$.
Multiplying \eqref{eq:split1} with a test function $v^+ \in \WW^+$ and integrating over $\D^\ell$ yields 
\begin{align*} 
(q^+ -\C w^+,v^+)_{\D^\ell}  
&= (\sgrad w^-,v^+)_{\D^\ell} 
 = -(w^-,\sgrad v^+)_{\D^\ell} + (\sn w^-,v^+)_{\partial\D^\ell}.
\end{align*}
Here, we made use of the integration-by-parts formula \eqref{eq:ipp} in the last step. 
The boundary condition \eqref{eq:split3} allows us to replace the last term, and inserting the definition of the collision operator $\C$ then already yields \eqref{eq:var1}. 
The validity of equation \eqref{eq:var2} follows immediately by testing \eqref{eq:split2} with $v^- \in \VV^-$.
\end{proof}
Let us note at this point that, due to the particular reflection boundary condition, no half-space integrals appear in the variational characterization of the perturbed problem. 

\subsection{Weak formulation}
We can now give the following weak formulation of problem \eqref{eq:sys1}--\eqref{eq:sys2} and of the equivalent problem \eqref{eq:split1}--\eqref{eq:split3}, respectively.
\begin{problem}\label{prob:weak}
Find $w^+ \in \WW^+$ and $w^- \in \VV^-$ such that \eqref{eq:var1}--\eqref{eq:var2} holds.  
\end{problem}
Let us note that existence of a weak solution is immediately obtained from Theorem~\ref{thm:2} and Lemma~\ref{lem:variational}. To show uniqueness and to facilitate the further discussion, we will assume in the following that
\begin{myass}
\item[(A5)] $\gamma \|v\|_{L^2(\D^\ell)}^2 \le (\C v, v)_{\D^\ell} \le \Gamma \|v\|_{L^2(\D^\ell)}$ for all $v\in L^2(\D^\ell)$ for some $0<\gamma,\Gamma$.
\end{myass}
This condition is valid, e.g., if the medium is uniformly absorbing and it implies that the artificial absorption has to satisfy $\gamma \le \a \le \Gamma$ as well. Using the arguments of \cite[Section 3.3]{EggerSchlottbom12}, the assumption could be further relaxed. 
Due to (A5) the total collision operator $\C: L^2(\D^\ell) \to L^2(\D^\ell)$ is boundedly invertible, which allows to define norms
\begin{align*}
\|u\|_{\C}^2 = (\C u, u)_{\D^\ell} \qquad \text{and} \qquad \|u\|_{\Ci}^2 = (\Ci u, u)_{\D^\ell},
\end{align*}
which are equivalent to the norm on $L^2(\D^\ell)$, i.e., 
$\gamma \|u\|_{L^2(\D^\ell)}^2 \le \|u\|_{\C}^2 \le \Gamma \|u\|_{L^2(\D^\ell)}^2$ and 
$\Gamma^{-1} \|u\|_{L^2(\D^\ell)}^2 \le \|u\|_{\Ci}^2 \le \gamma^{-1} \|u\|_{L^2(\D^\ell)}^2$.  
By minor modification of the arguments used in \cite{EggerSchlottbom12}, we can now deduce the following assertion.
\begin{theorem}\label{thm:3a}
Let (A1)--(A5) hold. Then problem \eqref{eq:var1}--\eqref{eq:var2} has a unique solution $w^+ \in \WW^+$ and $w^- \in \VV^-$ and there holds
\begin{align*}
\tnorm{(w^+,w^-)} \le C_D \|q\|_{L^2(\D)},
\end{align*}
with constant $C_D$ depending at most linearly on $\gamma^{-1}$ and $\Gamma$.
In addition, the function $w=w^++ w^-\in W^2(\D^\ell)$ coincides with the unique solution of \eqref{eq:sys1}--\eqref{eq:sys2}. 
\end{theorem}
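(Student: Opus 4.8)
The plan is to eliminate the odd unknown $w^-$ through the second equation and to reduce the mixed system to a single, coercive variational problem for $w^+$ on $\WW^+$, on which the Lax--Milgram lemma can be invoked. Assumption (A5) makes $\C$ boundedly invertible on $L^2(\D^\ell)$, and $\C$ preserves parity, so that $\Ci$ maps odd functions to odd functions. Hence equation \eqref{eq:var2}, valid for all $v^- \in \VV^-$, is equivalent to the identity $\C w^- = q^- - \sgrad w^+$ in $L^2(\D^\ell)$ (note that $q^- - \sgrad w^+$ is odd), i.e.\ $w^- = \Ci(q^- - \sgrad w^+)$. Inserting this into \eqref{eq:var1} leaves the reduced problem of finding $w^+ \in \WW^+$ with $A(w^+,v^+) = F(v^+)$ for all $v^+ \in \WW^+$, where
\begin{align*}
A(w^+,v^+) &= (w^+,v^+)_{\partial\D^\ell} + (\C w^+,v^+)_{\D^\ell} + (\Ci\,\sgrad w^+,\sgrad v^+)_{\D^\ell},\\
F(v^+) &= (q^+,v^+)_{\D^\ell} + (\Ci q^-,\sgrad v^+)_{\D^\ell}.
\end{align*}

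Next I would check the hypotheses of Lax--Milgram. The form $A$ is symmetric, and $A(v^+,v^+) = \|v^+\|_{L^2(\partial\D^\ell)}^2 + \|v^+\|_{\C}^2 + \|\sgrad v^+\|_{\Ci}^2$; by the norm equivalences induced by (A5) this is bounded below by a fixed multiple of the $\WW^+$-norm, and continuity of $A$ follows from Cauchy--Schwarz together with $\|\cdot\|_\C^2 \le \Gamma\|\cdot\|_{L^2}^2$ and $\|\cdot\|_{\Ci}^2\le\gamma^{-1}\|\cdot\|_{L^2}^2$. The functional $F$ is bounded, since $|F(v^+)| \le \|q^+\|_{\Ci}\|v^+\|_{\C} + \|q^-\|_{\Ci}\|\sgrad v^+\|_{\Ci} \le C\|q\|_{L^2(\D)}\,\tnorm{(v^+,0)}$, where I used that $q \equiv 0$ in the extension layer so that $\|q\|_{L^2(\D^\ell)} = \|q\|_{L^2(\D)}$. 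Lax--Milgram then yields a unique $w^+ \in \WW^+$, whereupon $w^- = \Ci(q^- - \sgrad w^+)$ is determined uniquely with $\|w^-\|_{L^2(\D^\ell)} \le \gamma^{-1}(\|q^-\|_{L^2} + \|\sgrad w^+\|_{L^2})$; assembling these gives $\tnorm{(w^+,w^-)} \le C_D\|q\|_{L^2(\D)}$. To obtain the asserted \emph{linear} dependence of $C_D$ on $\gamma^{-1}$ and $\Gamma$, I would run the estimates entirely in the weighted norms $\|\cdot\|_\C$ and $\|\cdot\|_{\Ci}$ — in which $A$ and $F$ have constants independent of $\gamma,\Gamma$ — and pass to the $L^2$-based norm only at the very end.

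It remains to identify $w = w^+ + w^-$ with the solution of \eqref{eq:sys1}--\eqref{eq:sys2}. The defining relation $\C w^- = q^- - \sgrad w^+$ is precisely the strong equation \eqref{eq:split2}. Testing \eqref{eq:var1} with functions $v^+ \in \WW^+$ of compact support, whose traces vanish, removes the boundary term and gives $-(w^-,\sgrad v^+)_{\D^\ell} = (q^+ - \C w^+, v^+)_{\D^\ell}$; since $q^+ - \C w^+ \in L^2(\D^\ell)$, this shows $\sgrad w^- = q^+ - \C w^+$ in $L^2$, hence $w^- \in W^2(\D^\ell)$ and \eqref{eq:split1} holds. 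Reinserting \eqref{eq:split1} into the full equation \eqref{eq:var1} and integrating by parts, the remaining boundary integral forces \eqref{eq:split3}. By the even--odd equivalence lemma, $w=w^++w^-$ solves \eqref{eq:sys1}--\eqref{eq:sys2}, and Theorem~\ref{thm:2} identifies it with the unique strong solution; conversely, that solution produces through Lemma~\ref{lem:variational} a solution of \eqref{eq:var1}--\eqref{eq:var2}, which by the uniqueness just established coincides with $(w^+,w^-)$.

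The main obstacle is the coercivity of the reduced form $A$: controlling the full $\WW^+$-norm, and in particular the derivative part $\|\sgrad w^+\|_{L^2(\D^\ell)}$, relies essentially on the positive definiteness of $\Ci$ granted by (A5); without it the elimination step would only deliver control of $w^+$ in $L^2(\D^\ell)$ and on the boundary. A secondary technical point is the regularity bootstrap in the last paragraph, where one must legitimately recover $\sgrad w^- \in L^2(\D^\ell)$ and the boundary condition \eqref{eq:split3} from the weak formulation.
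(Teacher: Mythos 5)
Your proposal is correct, and it organizes the argument differently from the paper. The paper's own proof is essentially a citation: it states that the proof of \cite[Theorem~3.1]{EggerSchlottbom12} applies ``almost verbatim'' --- that is, an inf-sup (Babu\v{s}ka--Aziz) stability argument for the full mixed bilinear form on $\WW^+\times\VV^-$, the same machinery reused later for the Galerkin case --- with the only observation that replacing the weighted boundary term of \cite{EggerSchlottbom12} by the unweighted term $(w^+,v^+)_{\partial\D^\ell}$ does not disturb that proof; the identification of $w=w^++w^-$ with the strong solution is then obtained with no regularity bootstrap at all, since Theorem~\ref{thm:2} and Lemma~\ref{lem:variational} already supply a weak solution, which must coincide with the unique one. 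You instead eliminate $w^-$ via \eqref{eq:var2} and apply Lax--Milgram to the reduced coercive form $A$; this is legitimate (the residual $\C w^- +\sgrad w^+ - q^-$ is odd, so testing against all of $\VV^-$ gives $\C w^-=q^--\sgrad w^+$ in $L^2(\D^\ell)$, and $\Ci$ preserves parity), it is precisely the continuous analogue of the Schur complement \eqref{eq:schur2} used in Section~\ref{sec:linsys}, and your plan of running the estimates in the $\|\cdot\|_\C$ and $\|\cdot\|_{\Ci}$ norms and converting only at the end reproduces the paper's a-priori bound $\|\sgrad w^+\|^2_{\Ci}+\|w^+\|^2_\C+\|w^+\|^2_{L^2(\partial\D^\ell)}+\|w^-\|^2_\C\le C\|q\|^2_{\Ci}$ and hence the at most linear dependence of $C_D$ on $\gamma^{-1}$ and $\Gamma$. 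Two remarks: first, your direct bootstrap back to \eqref{eq:split1}--\eqref{eq:split3} is dispensable, since your closing uniqueness argument (strong solution from Theorem~\ref{thm:2} solves the weak problem by Lemma~\ref{lem:variational}, hence equals the unique weak solution) already settles the identification exactly as the paper does; second, the bootstrap is also where the only genuinely delicate point hides --- recovering \eqref{eq:split3} requires the pairing $(\sn\, w^-,v^+)_{\partial\D^\ell}$ to be meaningful and the traces of $\WW^+$ to be dense, i.e., trace technicalities for $w^-\in W^2(\D^\ell)$ that the paper deliberately sidesteps by the uniqueness shortcut. What your route buys is a self-contained, symmetric-coercive proof that mirrors the algebraic solution strategy; what the paper's route buys is that the identical inf-sup argument transfers verbatim to the discrete setting of Lemma~\ref{lem:approx_error} under (A6).
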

\begin{proof}
The proof of \cite[Theorem 3.1]{EggerSchlottbom12} applies almost verbatim and yields the existence and uniqueness of a solution as well as the a-priori estimate
\begin{align*}
\|\sgrad u^+\|^2_{\Ci} + \|u^+\|^2_{\C} + \|u^+\|_{L^2(\partial\D^\ell)}^2 + \|u^-\|_{\C}^2
\le C \|q\|_{\Ci}^2 
\end{align*}
with a universal constant $C>0$. Let us note that the change of the boundary term does not affect the proof given in \cite{EggerSchlottbom12}.
The postulated bounds for the solution then follow using assumption (A5) and the equivalence of the norms $\|\cdot\|_\C$, $\|\cdot\|_{\Ci}$, and $\|\cdot\|_{L^2(\D^\ell)}$. 
The last assertion follows by the uniqueness of the weak solution and noting that, according to Lemma~\ref{lem:variational}, the solution of \eqref{eq:sys1}--\eqref{eq:sys2} also solves \eqref{eq:var1}--\eqref{eq:var2}.
\end{proof}
\subsection{Galerkin approximation}
Let $\WW_h^+ \subset \WW^+$ and $\VV_h^- \subset \VV^-$ be closed subspaces. We then consider the following Galerkin approximation of Problem~\ref{prob:weak}.
\begin{problem}\label{prob:galerkin}
Find $(w_h^+,w_h^-) \in \WW_h^+ \times \VV_h^-$ such that 
\begin{align}
(\C w_h^+,v_h^+)_{\D^\ell} + (w_h^+,v_h^+)_{\partial\D^\ell} - (w_h^-, \sgrad v_h^+)_{\D^\ell} &= (q^+,v_h^+)_{\D^\ell} && \forall v_h^+ \in \WW_h^+, \label{eq:gal1} \\
(\sgrad w_h^+,v_h^-)_{\D^\ell} + (\C w_h^-,v_h^-)_{\D^\ell} &= (q^-,v_h^-)_{\D^\ell} && \forall v_h^- \in \VV_h^-.\label{eq:gal2}
\end{align}
\end{problem}
In order to ensure the existence of a unique discrete solution, we require that 
\begin{myass}
\item[(A6)] $\WW_h^+ \subset \WW^+$, $\VV_h^- \subset \VV^-$ are finite dimensional and $\{\sgrad w_h^+: w_h^+\in \WW_h^+\}\subset \VV_h^-$.
\end{myass}
This condition guarantees the uniform stability of the discrete variational problem. 
By the same arguments as used in \cite[Section 6]{EggerSchlottbom12}, we then obtain the following results.
\begin{lemma}\label{lem:approx_error}
Let assumptions (A1)--(A6) be valid. 
Then Problem~\ref{prob:galerkin} has a unique solution $(w_h^+,w_h^-) \in \WW_h^+ \times \VV_h^-$ and 
\begin{align*}
\tnorm{(w_h^+,w_h^-)} \le C_D \|q\|_{L^2(\D)},
\end{align*}
with the same constant $C_D$ as in Theorem~\ref{thm:3a}. Moreover,
\begin{align*}
\tnorm{(w^+ - w_h^+,w^--w_h^-)} \le C_D' \inf \tnorm{(w^+ - v_h^+,w^- - v_h^-)},
\end{align*}
where the infimum is taken over all $(v_h^+,v_h^-)\in \WW_h^+ \times \VV_h^-$. The constant $C_D'$ again depends at most linearly on $\gamma^{-1}$ and $\Gamma$.
\end{lemma}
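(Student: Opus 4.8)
The plan is to transfer the stability and well-posedness analysis of the continuous problem from Theorem~\ref{thm:3a} to the discrete level, exploiting that $\WW_h^+ \times \VV_h^-$ is a conforming subspace of $\WW^+ \times \VV^-$ and that the compatibility condition (A6) makes every test function used in the continuous proof available at the discrete level. Since Problem~\ref{prob:galerkin} is finite-dimensional, square, and linear, it suffices to establish the discrete a-priori estimate; injectivity then yields both existence and uniqueness, and the same computation reproduces the stability bound with the constant $C_D$ from Theorem~\ref{thm:3a}.

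First I would derive the discrete stability estimate. Testing \eqref{eq:gal1} with $v_h^+ = w_h^+$ and \eqref{eq:gal2} with $v_h^- = w_h^-$ and adding the two identities, the antisymmetric cross terms $-(w_h^-,\sgrad w_h^+)_{\D^\ell}$ and $(\sgrad w_h^+,w_h^-)_{\D^\ell}$ cancel, leaving
\[
\|w_h^+\|_\C^2 + \|w_h^+\|_{L^2(\partial\D^\ell)}^2 + \|w_h^-\|_\C^2 = (q^+,w_h^+)_{\D^\ell} + (q^-,w_h^-)_{\D^\ell}.
\]
By (A5) and Cauchy--Schwarz this controls $\|w_h^+\|_\C$, $\|w_h^-\|_\C$, and the boundary term. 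To also bound the streaming term $\|\sgrad w_h^+\|$, I would test \eqref{eq:gal2} with $v_h^- = \sgrad w_h^+$, which is the decisive step: condition (A6) guarantees $\sgrad w_h^+ \in \VV_h^-$, so this choice is admissible. Estimating the resulting identity in the $\|\cdot\|_{\Ci}$ norm using the already-bounded quantity $\|w_h^-\|_\C$ and $\|q^-\|$ yields control of $\|\sgrad w_h^+\|_{\Ci}$. These are precisely the steps of \cite[Theorem~3.1]{EggerSchlottbom12}, now performed with discrete test functions, and they give the first assertion.

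Next, for the quasi-optimal error bound I would use Galerkin orthogonality: subtracting \eqref{eq:gal1}--\eqref{eq:gal2} from \eqref{eq:var1}--\eqref{eq:var2} restricted to test functions in $\WW_h^+ \times \VV_h^-$ shows that the error $(w^+ - w_h^+, w^- - w_h^-)$ is orthogonal, in the sense of the bilinear form, to the discrete space. For arbitrary $(v_h^+,v_h^-) \in \WW_h^+ \times \VV_h^-$ I would split $w^\pm - w_h^\pm = (w^\pm - v_h^\pm) + (v_h^\pm - w_h^\pm)$; the discrete component $(v_h^+ - w_h^+, v_h^- - w_h^-)$ then solves the discrete variational problem with a right-hand side expressed through the approximation error $(w^+ - v_h^+, w^- - v_h^-)$. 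Applying the discrete stability estimate just proved bounds this component by a constant multiple of $\tnorm{(w^+ - v_h^+, w^- - v_h^-)}$, and the triangle inequality delivers the C\'ea-type estimate with a constant $C_D'$ depending linearly on $\gamma^{-1}$ and $\Gamma$. Taking the infimum over $(v_h^+,v_h^-)$ completes the argument.

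The main obstacle --- and the reason (A6) is imposed --- is obtaining uniform control of $\|\sgrad w_h^+\|$ in the discrete stability estimate. The diagonal tests only control the $L^2$-type quantities after the cross terms cancel; the streaming contribution is recovered solely by choosing $\sgrad w_h^+$ itself as an odd test function. Condition (A6) is exactly what makes this admissible at the discrete level, so that the continuous stability argument transfers verbatim and no separate discrete inf-sup condition has to be verified.
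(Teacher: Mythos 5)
Your proof is correct and follows essentially the same route as the paper, whose own proof consists of invoking the Babuska--Aziz lemma with details deferred to \cite[Theorem~6.1]{EggerSchlottbom12}; the mechanism there is precisely the one you identify, namely that (A6) makes $v_h^-=\sgrad w_h^+$ an admissible test function, so the continuous stability argument of Theorem~\ref{thm:3a} transfers verbatim and yields the uniform discrete stability behind the quasi-optimality estimate. One minor imprecision to tidy up: in the quasi-optimality step the residual functional coming from Galerkin orthogonality contains the terms $(w^+-v_h^+,\,\cdot\,)_{\partial\D^\ell}$ and $-(w^--v_h^-,\sgrad\,\cdot\,)_{\D^\ell}$, which are not of the form $(q^\pm,\,\cdot\,)_{\D^\ell}$, so rather than literally ``applying the discrete stability estimate just proved'' you should note that your test-function computation (the diagonal test combined with $z_h^-=\sgrad z_h^+$, equivalently the test pair $(z_h^+,\,z_h^-+\kappa\,\sgrad z_h^+)$) establishes the discrete inf-sup condition with respect to the full triple norm, which is exactly what the Babuska--Aziz lemma packages and what bounds the discrete error component by $\tnorm{(w^+-v_h^+,w^--v_h^-)}$.
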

\begin{proof}
The assertions result from application of the Babuska-Aziz lemma. 
Details can be found in the proof of \cite[Theorem~6.1]{EggerSchlottbom12}. 
\end{proof}
Together with the results of Section~\ref{sec:reflection},
we finally obtain the following error estimate.
\begin{theorem}\label{thm:5}
Let (A1)--(A6) hold and let $u$, $w=w^{\ell,a}$, and $(w_h^+,w_h^-)$ denote the unique solutions of \eqref{eq:rte1}--\eqref{eq:rte2}, of \eqref{eq:sys1}--\eqref{eq:sys2}, and of Problem~\ref{prob:galerkin}, respectively. Then
\begin{align*}
\| u^+ - w_h^+\|_{W^2(\D)}&+\|u^- - w_h^-\|_{L^2(\D^\ell)} \\
&\le C_D e^{-\a \ell}\|q\|_{L^2(\D)} 
+ C_D' \inf \tnorm{(w^+ - v_h^+,w^- - v_h^-)}.
\end{align*}
The infimum is again taken over all $(v_h^+,v_h^-)\in \WW_h^+ \times \VV_h^-$, and the constants $C_D$ and $C_D'$ depend at most linearly on $\gamma^{-1}$ and $\Gamma$.
\end{theorem}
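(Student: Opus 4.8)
The plan is to combine the modeling (consistency) error estimate of Theorem~\ref{thm:2a} with the quasi-optimal discretization estimate of Lemma~\ref{lem:approx_error} through a single triangle inequality. Writing the error as $u - w_h = (u - w) + (w - w_h)$, where $w = \wla$ solves the perturbed problem \eqref{eq:sys1}--\eqref{eq:sys2} and $(w_h^+,w_h^-)$ is the Galerkin solution of Problem~\ref{prob:galerkin}, splits the total error into a part controlled by the perfectly matched layer analysis of Part~1 and a part controlled by the Galerkin analysis. Since $u$ is defined on $\D$ whereas $w$ and $w_h$ live on the extended domain $\D^\ell$, I would compare against the extension $\ula = E^{\ell,\a} u$ provided by Theorem~\ref{thm:1}, which satisfies $\ula|_\D = u$; accordingly, the symbol $u^-$ in the term $\|u^- - w_h^-\|_{L^2(\D^\ell)}$ is read as $(E^{\ell,\a}u)^-$.

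First I would treat the consistency error. For the even part I would use that the parity projection $v \mapsto v^\pm$ defined in \eqref{eq:split} is an orthogonal projection in $L^2(\S)$ that commutes with restriction and with the action of $\mu^{\ell,\a}$ and $\K^\ell$, and that $\sgrad$ merely reverses parity while preserving the $L^2$ norm of the component; hence $\|(u-w)^+\|_{W^2(\D)} \le \|u - w\|_{W^2(\D)}$, and the first estimate of Theorem~\ref{thm:2a} bounds this by $C e^{-2\a\ell}\|q\|_{L^2(\D)}$. For the odd part I would write $(E^{\ell,\a}u)^- - w^- = (E^{\ell,\a}u - w)^-$ and estimate $\|(E^{\ell,\a}u - w)^-\|_{L^2(\D^\ell)} \le \|E^{\ell,\a}u - w\|_{L^2(\D^\ell)}$, splitting the latter over $\D$ and $\D^\ell \setminus \D$. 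On $\D$ the difference is again $u - w$, bounded as above; on the layer $\D^\ell \setminus \D$ the second estimate of Theorem~\ref{thm:2a} gives $\a \|w - E^{\ell,\a}u\|_{L^2(\D^\ell\setminus\D)} \le C\a^{\frac{p-1}{p}} e^{-\a\ell}\|q\|$, so that $\|w - E^{\ell,\a}u\|_{L^2(\D^\ell\setminus\D)} \le C \a^{-1/p} e^{-\a\ell}\|q\|$. Taking $p = 2$ and using the lower bound $\a \ge \gamma$ from (A5), the factor $\a^{-1/2}$ is absorbed into the constant, and together with $e^{-2\a\ell} \le e^{-\a\ell}$ the full consistency contribution is bounded by $C_D e^{-\a\ell}\|q\|_{L^2(\D)}$.

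Next I would treat the discretization error $\|w^+ - w_h^+\|_{W^2(\D)} + \|w^- - w_h^-\|_{L^2(\D^\ell)}$. Restriction to $\D$ only decreases the $W^2$ norm, so $\|w^+ - w_h^+\|_{W^2(\D)} \le \|w^+ - w_h^+\|_{W^2(\D^\ell)}$, and inspecting the definition of $\tnorm{\cdot}$ shows that both $\|w^+ - w_h^+\|_{W^2(\D^\ell)}$ and $\|w^- - w_h^-\|_{L^2(\D^\ell)}$ are dominated by $\tnorm{(w^+ - w_h^+, w^- - w_h^-)}$. The quasi-optimality estimate of Lemma~\ref{lem:approx_error} then bounds this by $C_D' \inf \tnorm{(w^+ - v_h^+, w^- - v_h^-)}$ over $(v_h^+,v_h^-) \in \WW_h^+ \times \VV_h^-$. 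Adding the two contributions and applying the triangle inequality on the left-hand side yields the claimed estimate, with the constants inheriting the at most linear dependence on $\gamma^{-1}$ and $\Gamma$ from Theorem~\ref{thm:3a} and Lemma~\ref{lem:approx_error}.

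The routine parts are the triangle inequality and the domination of the target norms by $\tnorm{\cdot}$. The step requiring the most care is the bookkeeping of the even--odd splitting together with the two different domains $\D$ and $\D^\ell$: one must verify that the parity projection does not increase the relevant norms (using that $\sgrad$ flips parity but preserves the $L^2$ norms of the components) and that the extension $E^{\ell,\a}u$ is the correct object against which to measure the odd $L^2(\D^\ell)$ error, so that the layer estimate of Theorem~\ref{thm:2a} applies and the $\a^{-1/p}$ factor can be absorbed via the lower bound in (A5).
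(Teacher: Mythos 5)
Your proposal is correct and follows exactly the route the paper intends: its proof of Theorem~\ref{thm:5} is the one-line remark that the result ``follows from the previous results via the triangle inequality,'' i.e., splitting $u-w_h=(u-w)+(w-w_h)$ and invoking Theorem~\ref{thm:2a} for the consistency part and Lemma~\ref{lem:approx_error} for the Galerkin part. Your additional bookkeeping --- reading $u^-$ on $\D^\ell$ as $(E^{\ell,\a}u)^-$ (consistent with the remark following the theorem), noting that the parity projection does not increase the relevant norms, and absorbing the $\a^{-1/2}$ factor from the layer estimate via $\a\ge\gamma$ from (A5) --- correctly fills in the details the paper leaves implicit.
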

\begin{proof}
The result follows from the previous results via the triangle inequality.
\end{proof}
\begin{remark} \rm
Assume that the best-approximation error can be bounded uniformly by $\inf\tnorm{(w^+ - v_h^+,w^- - v_h^-)}=O(h)$.
Then the optimal choice of the parameters $\ell,a$ would be such that $e^{-\ell a} \approx h$, 
where we neglect the at most linear dependence of $C_D'$ on $a$. 
Hence, it suffices to choose $a$ or $\ell$ proportional to $|\log h|$ in order to obtain 
a quasi-optimal overall approximation.
Moreover, in view of Theorem~\ref{thm:2a}, $\inf\tnorm{(w^+ - v_h^+,w^- - v_h^-)}$ can replaced by $\inf\tnorm{(E^{\ell,a}u^+ - v_h^+,E^{\ell,a}u^- - v_h^-)}$ in the estimate of Theorem~\ref{thm:5}, which can be made explicit under regularity assumptions on the solution $u$ of the original problem.
\end{remark}

\section{The $P_N$-finite element method} \label{sec:pnfem}
We now discuss a particular construction of approximation spaces $\WW_h^+$ and $\VV_h^-$
using spherical harmonics and finite elements.
\subsection{Angular approximation}
As angular basis functions $H_n$ in the moment expansion \eqref{eq:fourier}, 
we employ the spherical harmonics $Y_l^m$, $-l \le m \le l$, $l \ge 0$ in the sequel.
These functions form an orthonormal basis for $L^2(\S)$ and allow to efficiently realize the splitting \eqref{eq:split}, since $Y_{2l}^m$ and $Y_{2l+1}^m$ are even and odd functions, respectively.
For the approximation of even and odd functions of angular variable $\s$, we then consider the spaces
\begin{align*}
\SS_N^+ &= {\rm span}\{Y_{2l}^m, 0\leq 2l\leq N, -2l\leq m\leq 2l\},\\
\SS_N^- &= {\rm span}\{Y_{2l+1}^m, 0\leq 2l+1\leq N, -2l-1\leq m\leq 2l+1\}.
\end{align*}
Let us note that ${\rm dim\,} (\SS_N^\pm) \approx N^{2}$. 
We will later only consider the choice $N$ odd; necessary modifications for the case $N$ even can be found in \cite{EggerSchlottbom12}.

\subsection{Spatial approximation} 
We denote by $\T_h=\T_h(\R^\ell)$ a quasi-uniform regular partition of the spatial domains $\R^\ell \subset \RR^3$ into simplicial elements $T$ of size $h$
and assume that $\T_h(\R) = \{T \in \T_h(\R^\ell) : T \subset \overline\R\}$ is 
a conforming mesh of the original domain $\R \subset\R^\ell$.
By $\P_k(\T_h) = \{v \in L^2(\R^\ell) : v|_T \in P_k(T)\}$, we denote the spaces of piecewise polynomials over $\T_h$ of degree less or equal to $k$. For the approximation of the even and odd moments $u_n$ in the expansion \eqref{eq:fourier}, we utilize the spaces
\begin{align*}
\XX_h^+ =\P_{1}(\T_h)\cap H^1(\R^\ell) \qquad \text{and} \qquad 
\XX_h^- =\P_{0}(\T_h)\subset L^2(\R^\ell).
\end{align*}
We denote by $\{\varphi_j\}$ and $\{\chi_k\}$ the canonical basis consisting of hat functions and piecewise constant functions, respectively, and recall that ${\rm dim}(\XX_h^+) \approx {\rm dim}(\XX_h^-) \approx h^{-3}$.

\subsection{Tensor product spaces}
As choice for the spaces $\WW_h^+$ and $\VV_h^-$ in Problem~\ref{prob:galerkin}, we then consider the following tensor product construction
\begin{align*}
\WW_h^+= \XX_h^+ \otimes \SS_N^+ \quad\text{and}\quad
\VV_h^-= \XX_h^- \otimes \SS_N^-.
\end{align*}
By similar arguments as in \cite{EggerSchlottbom12,WrightArridgeSchweiger09}, one can show the following properties.
\begin{lemma}
Let $\WW_h^+$ and $\VV_h^-$ be as above. 
Then ${\rm dim}(\WW_h^+) \approx {\rm dim}(\VV_h^-) \approx h^{-3} N^{2}$. If $N$ is chosen odd, then assumption (A6) is satisfied.
\end{lemma}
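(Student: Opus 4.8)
The plan is to prove the two claimed properties separately: first the dimension counts, then the compatibility condition (A6). For the dimension counts, I would simply combine the given cardinalities of the factors. Since $\WW_h^+ = \XX_h^+ \otimes \SS_N^+$ is a tensor product, we have ${\rm dim}(\WW_h^+) = {\rm dim}(\XX_h^+) \cdot {\rm dim}(\SS_N^+)$, and the paper already records ${\rm dim}(\XX_h^+) \approx h^{-3}$ together with ${\rm dim}(\SS_N^\pm) \approx N^2$; multiplying gives ${\rm dim}(\WW_h^+) \approx h^{-3} N^2$. The identical argument applied to $\VV_h^- = \XX_h^- \otimes \SS_N^-$ using ${\rm dim}(\XX_h^-) \approx h^{-3}$ yields ${\rm dim}(\VV_h^-) \approx h^{-3} N^2$, establishing the first assertion.

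The substance of the lemma is the verification of (A6), namely that $\{\sgrad w_h^+ : w_h^+ \in \WW_h^+\} \subset \VV_h^-$ when $N$ is odd. I would exploit the tensor product structure to separate the spatial and angular effects of the operator $\sgrad$. For a basis element $w_h^+ = \varphi_j \otimes Y_{2l}^m$, applying $\sgrad$ produces $(\nabla \varphi_j) \cdot \s \, Y_{2l}^m$, so I must show this lies in $\XX_h^- \otimes \SS_N^-$. The key separation is that $\nabla \varphi_j$ is, by definition of $\XX_h^+ = \P_1(\T_h) \cap H^1(\R^\ell)$, a vector of piecewise constants, hence each component lives in $\XX_h^- = \P_0(\T_h)$; and the multiplication by $\s$ in the angular variable sends an even spherical harmonic to an odd function. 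Thus the spatial factor lands in $\XX_h^-$ and the angular factor in an odd-parity space, so the whole thing lands in $\XX_h^- \otimes \SS_N^-$ provided the angular factor stays within the truncation level $N$.

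The main obstacle is precisely this angular truncation issue, and it is where the oddness of $N$ enters. The product $\s \, Y_{2l}^m$ does not remain at degree $2l$; by the standard recursion for spherical harmonics it is a finite linear combination of $Y_{2l-1}^{m'}$ and $Y_{2l+1}^{m'}$, i.e.\ of odd harmonics of degrees $2l-1$ and $2l+1$. For this combination to lie in $\SS_N^- = {\rm span}\{Y_{2l+1}^{m'} : 0 \le 2l+1 \le N\}$, I need the highest appearing degree $2l+1$ to satisfy $2l+1 \le N$. Since $w_h^+ \in \WW_h^+$ only involves $Y_{2l}^m$ with $2l \le N$, the extremal case is $2l = N-1$ (possible exactly when $N$ is odd), giving top degree $2l+1 = N \le N$, which is admissible. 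This is the decisive point: with $N$ odd the even space $\SS_N^+$ reaches degree $N-1$ and its image under multiplication by $\s$ reaches degree $N$, exactly matching the top degree retained in $\SS_N^-$, so no harmonics are lost and the inclusion holds. I would then conclude by linearity over the product basis that $\sgrad$ maps $\WW_h^+$ into $\VV_h^-$, completing the verification of (A6).
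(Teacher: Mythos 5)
Your proposal is correct and follows essentially the same route as the paper: dimensions by multiplying the factor dimensions of the tensor product, $\nabla\varphi_j$ piecewise constant so the spatial factor lands in $\XX_h^-=\P_0(\T_h)$, and the spherical-harmonic recursion $\s\, Y_{2l}^m \in \mathrm{span}\{Y_{2l\pm1}^{m'}\}$ combined with the parity/degree count (top degree $2l+1\le N$ precisely because $N$ is odd) to get $\s\,\SS_N^+\subset(\SS_N^-)^3$. Your write-up merely makes explicit the extremal-degree bookkeeping that the paper leaves implicit, which is a fair expansion rather than a different argument.
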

\begin{proof}
The estimates for the dimensions are obtained directly from the tensor product construction.
By well-known recurrence relations for spherical harmonics, one can further see that $s Y_l^m$ is of the form \cite{Arridge99,WrightArridgeSchweiger09} 
$$
s Y_l^m = 
\begin{pmatrix} 
a_{1lm} Y_{l-1}^{m-1} + b_{2lm} Y_{l-1}^{m+1} + c_{3lm} Y_{l+1}^{m-1} + d_{4lm} Y_{l+1}^{m+1} \\
a_{1lm} Y_{l-1}^{m-1} + b_{2lm} Y_{l-1}^{m+1} + c_{3lm} Y_{l+1}^{m-1} + d_{4lm} Y_{l+1}^{m+1} \\
e_{lm} Y_{l-1}^{m} + f_{lm} Y_{l+1}^{m}
\end{pmatrix}.
$$
Together with the assumption that $N$ is odd, this implies that $s \SS_N^+ \subset (\SS_N^-)^3$. 
Since the derivative of a continuous piecewise linear function is piecewise constant, we further have $\nabla \XX_h^+ \subset (\XX_h^-)^3$.  
The compatibility condition in (A6) then follows directly from the tensor product construction.  
\end{proof}
As a consequence of the previous lemma, all results presented in Section~\ref{sec:variational} 
apply directly to the $P_N$-finite element method based on this choice of approximation spaces.

\subsection{Complexity estimates}

The choice of a basis for $\WW_h^+$ and $\VV_h^-$ allows to recast the discrete variational 
problem \eqref{eq:gal1}--\eqref{eq:gal2} as a linear system 
\begin{alignat*}{2}
\ttM \ttup + \ttR \ttup - \ttB^\top \ttum &= \ttqp, \\
\ttB \ttup + \ttC \ttum              &= \ttqm,
\end{alignat*}
where $\ttup,\, \ttum,\, \ttqp,\, \ttqm$ are the corresponding coefficient vectors.
When choosing the natural tensor product basis with components $\{\varphi_j Y_{2m}^l\}$ for the even components and $\{\chi_j Y_{2m+1}^l\}$ for the odd components, the resulting system matrices can be seen to have some favourable properties. 
\begin{lemma} \label{lem:prop}
Let assumptions (A2)--(A3) and (A5)--(A6) hold and $\a > 0$. 
Then the matrices $\ttM$ and $\ttC$ are symmetric and positive definite and $\ttC$ is symmetric and positive semi-definite. 
If the basis for $\WW_h^+$ and $\VV_h^-$ are chosen as described above, then 
$\ttM$ and $\ttR$ are block-diagonal with sparse blocks, $\ttB$ is block-sparse with sparse blocks, and $\ttC$ is diagonal. 
Moreover, the number of non-zero entries is given by
$nnz(\ttM) \approx nnz(\ttB) \approx nnz(\ttC) \approx h^{-3} N^2$ and $nnz(\ttR) \approx h^{-2} N^2$. 
\end{lemma}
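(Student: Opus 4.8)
The plan is to analyze the matrix structure by exploiting the tensor product basis $\{\varphi_j Y_{2l}^m\}$ and $\{\chi_k Y_{2l+1}^m\}$, treating the spatial and angular contributions separately. Each matrix is the Galerkin matrix of a bilinear form in \eqref{eq:gal1}--\eqref{eq:gal2}, and since the basis functions factor into a spatial part times an angular part, each matrix entry factors as a product (or sum of products) of a spatial integral over $\R^\ell$ and an angular integral over $\S$. I would identify the matrices as follows: $\ttM$ corresponds to $(\C \cdot,\cdot)_{\D^\ell}$ on the even part, $\ttC$ to $(\C\cdot,\cdot)_{\D^\ell}$ on the odd part, $\ttR$ to the boundary term $(\cdot,\cdot)_{\partial\D^\ell}$, and $\ttB$ to the coupling $(\sgrad\cdot,\cdot)_{\D^\ell}$.

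For the algebraic properties, symmetry and definiteness follow directly from assumption (A5): since $\C$ is symmetric and satisfies $(\C v,v)_{\D^\ell}\geq\gamma\|v\|^2_{L^2(\D^\ell)}$, both $\ttM$ and $\ttC$ are symmetric positive definite, while $\ttR$, arising from $(\cdot,\cdot)_{\partial\D^\ell}$ with $L^2$ boundary inner product, is symmetric positive semi-definite (it is only semi-definite because boundary traces can vanish). First I would record these facts, noting that $\ttM$ additionally inherits positive definiteness because it combines the coercive volume term; here I would use $\a>0$ to guarantee $\mu^{\ell,a}>0$ throughout the extension layer.

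For the sparsity pattern, the key observations are structural. The collision operator $\C=\mu^{\ell,a}-\K^\ell$ acts on the angular variable as a multiplication by $\mu$ plus the scattering operator $\K^\ell$, which by (A3) is diagonalized by spherical harmonics; hence in the angular variable $\C$ is block-diagonal (indeed diagonal across spherical harmonic degrees), while the spatial integrals $\int\varphi_i\varphi_j$ and $\int\chi_k\chi_l$ produce the sparse finite element mass matrices. This makes $\ttM$ block-diagonal with sparse (FEM-mass-matrix) blocks, and since $\XX_h^-=\P_0$ consists of piecewise constants whose mass matrix is diagonal, $\ttC$ is diagonal. The coupling matrix $\ttB$ involves $(\sgrad w_h^+,v_h^-)$; here the recurrence relation $sY_l^m=\sum(\cdots)Y_{l\pm1}^{m\pm1}$ cited in the preceding lemma shows $s$ couples only neighbouring angular degrees, giving $\ttB$ its block-sparse structure, while the spatial gradient coupling $\int\nabla\varphi_j\chi_k$ is sparse. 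The boundary matrix $\ttR$ is block-diagonal in the angular index with sparse blocks, but its nonzeros scale like $h^{-2}N^2$ rather than $h^{-3}N^2$ because the boundary $\partial\R^\ell$ is a surface of one dimension lower; I would count nonzeros by multiplying the number of spatial basis interactions (of order $h^{-3}$ in the volume, $h^{-2}$ on the boundary) by the number of angular couplings (of order $N^2$, since each angular mode couples to only finitely many others by the recurrence). The main obstacle, and the step requiring the most care, is verifying the precise \emph{block-sparse with sparse blocks} structure of $\ttB$: one must check that the finite angular stencil of the recurrence combined with the finite spatial stencil of $\int\nabla\varphi_j\chi_k$ yields a uniformly bounded number of nonzeros per row, so that the total count is indeed $\Theta(h^{-3}N^2)$ rather than something denser.
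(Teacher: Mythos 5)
Your proposal is correct, and it is essentially the intended argument: the paper states Lemma~\ref{lem:prop} \emph{without} proof, the only supporting material being the preceding lemma (the recurrence for $s Y_l^m$, which bounds the angular stencil of $\ttB$) and the tensor-product construction, and you assemble exactly these ingredients --- identification of $\ttM,\ttR,\ttB,\ttC$ with the bilinear forms in \eqref{eq:gal1}--\eqref{eq:gal2}, symmetry and (semi-)definiteness from (A5) and the Gram-matrix structure, diagonalization of $\K^\ell$ by spherical harmonics for the block-diagonality of $\ttM$, diagonality of the $\P_0$ mass matrix for $\ttC$, and the counts $h^{-3}N^2$ (volume) versus $h^{-2}N^2$ (boundary); in particular you correctly isolate the key point that $\ttR$ factorizes with an \emph{unweighted full-sphere} angular integral, so orthonormality of the $Y_l^m$ makes it angularly diagonal, which is precisely what the chosen reflection operator was designed to achieve. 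One cosmetic remark: the lemma as printed lists $\ttC$ twice; your reading that $\ttR$ is the symmetric positive semi-definite matrix is the intended one.
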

As a direct consequence of these properties, the multiplication with any of the system matrices can be achieved in order optimal complexity. 
\begin{remark} \rm 
In our numerical tests, we consider test problems with invariance in one  spatial direction. It then suffices to 
consider a two-dimensional cross-section $\R \subset \RR^2$ of the three-dimensional domain while the angular domain still is $\S=\SS^2$. In that case, the spatial mesh $\T_h$ consists of triangles and ${\rm dim}(\XX_h^\pm) \approx h^{-2}$ and consequently ${\rm dim}(\WW_h^+) \approx {\rm dim}(\VV_h^-) \approx h^{-2} N^2$. All observations made above apply with obvious modifications also to this setting.
\end{remark}

\subsection{Solution of the linear system} \label{sec:linsys}

Let us finally also comment briefly on the efficient solution of the linear system arising from the tensor-product $P_N$-finite element approximation. Since the matrix $\ttC$ is diagonal and positive definite, one can eliminate $\ttum$ via the second equation by
\begin{align} \label{eq:schur1}
\ttum = \ttC^{-1} (\ttqm - \ttB \ttup).
\end{align}
Note that $\ttum$ can be computed efficiently, once the even component $\ttup$ of the solution is known. 
Inserting the formula for $\ttum$ into the first equation yields the Schur complement system 
\begin{align} \label{eq:schur2}
[\ttM + \ttR + \ttB^\top \ttC^{-1} \ttB] \ttup = \ttqp + \ttB^\top \ttC^{-1} \ttqm.
\end{align}
Using Lemma~\ref{lem:prop}, the matrix $\ttS = [\ttM+\ttR+\ttB^\top \ttC^{-1} \ttB]$ is symmetric and positive definite. Moreover, the matrix vector product $\ttS \cdot \ttup$ can be realized, even without assembling $\ttS$, with $h^{-3} N^2$ algebraic operations and thus in optimal complexity.
For the efficient numerical solution of the Schur complement system, we can employ a preconditioned conjugate gradient (PCG) method.
In our numerical tests, we utilize a spatial multigrid strategy for preconditioning, cf. \cite{ArridgeEggerSchlottbom13,ChangLee2003}.

\section{Numerical illustrations} \label{sec:num}

We now illustrate the theoretical results obtained in the previous sections by some numerical tests.
\subsection{Example 1: Constant coefficients}
We choose the unit ball $\R=B_1(0)\subset \RR^2$ as computational domain, and 
define the model parameters as
\begin{align*}
k(r,s\cdot s')=\frac{10}{4\pi},\quad \quad \mu(r)=10+\frac{1}{10}.
\end{align*}
% $\mu_s=10$ and $\mu_a=1/10$ with isotropic scattering.
This corresponds to a scattering dominated regime with small absorption. 
Particles are introduced into the domain via an isotropic source
\begin{align*}
q(r,s)= \exp(-5 |\r - \r_0|^2), \quad (\r,\s)\in\D, \qquad \r_0=(\tfrac{3}{4},0).
\end{align*}
The layer domain is chosen as $\R^\ell=B_{6/5}(0)$, i.e., $\ell = 1/5$. For our numerical experiments we have chosen different extensions of the absorption coefficient, see for instance Table~\ref{tab:smooth_err_loc}.
Since an analytical solution is not available, we chose as a reference solution $\widehat u$ the $P_N$-FEM approximation $w^{\ell,a}_h$ with $\exp(-a\ell)=1/32$ computed for $N=11$ on a spatial grid with $177\,761$ vertices, which corresponds to $h=0.005$. This amounts to a total number of degrees of freedom of ${\rm dim}(\WW_h^+ \times \VV_h^-) = 39\,367\,938$. The dimension of the even part of the solution space is ${\rm dim}(\WW_h^+) = 11\,732\,226$. 
The tolerance of the PCG algorithm is set to $10^{-7}$ in all our tests.
The computations are performed with MATLAB 2016b on a MacBook pro with Intel i7-6700HQ (2.6 GHz) and 16 GB of memory.
In Table~\ref{tab:smooth_err_loc} we plot for $N\in\{9,11\}$ and $h\in \{0.02,0.01,0.005\}$ the error
\begin{align*}
	e_h^2=\|\widehat u-w^{\ell,a}_h\|_{L^2(\D)}^2+\|\sgrad (\widehat u-w^{\ell,a}_h)^+\|_{L^2(\D)}^2,
\end{align*}
where $\widehat u$ is our reference solution. As can be seen from the table, the error is determined by the approximation properties of the discretization and the consistency error due to the absorbing layer and the reflection boundary conditions. 
The slight increase in the error for $N=9$ and $h\geq 0.01$ for decreasing $e^{-a\ell}\leq 1/8$ could be explained by the fact that we approximate the curved boundary $\partial\R$ by triangles.
For $N=9$ and the spatial reference grid with $h=0.005$, we, however, observe a saturation effect, i.e., the approximation error dominates the consistency error for $e^{-a\ell}\leq  1/8$. Depending on the spatial and angular grid, we observe exponential decay of the error as long as the approximation error is negligible, which is indicated by italic numbers in Table~\ref{tab:smooth_err_loc}. Furthermore, for $h=0.005$ and $N=11$, there is no approximation error, and the total error decays exponentially as predicted by the theoretical results of part one. 
\begin{table}
\caption{\label{tab:smooth_err_loc} Example 1: Error $e_h$ for different $P_N$-approximations and damping parameters $a$. The italic numbers indicates the turnover point at which the approximation error dominates for increasing $a$.}
\centering
\begin{tabular}{c c c c c c c c}
\toprule
			&	\multicolumn{3}{c}{$N=9$}		&	&		\multicolumn{3}{c}{$N=11$} \\
				\cmidrule{2-4} 								\cmidrule{6-8} 
$e^{-a\ell}$& $h=0.02$ 	& $h=0.01$	& $h=0.005$	&	& $h=0.02$ & $h=0.01$	& $h=0.005$\\
 \midrule
$15/16$		& 0.085		& 0.072		& 0.068		&	& 0.082		& 0.068		& 0.0643\\
$7/8$		& 0.073		& 0.059		& 0.054		&	& 0.071		& 0.055		& 0.0500\\
$3/4$		& 0.061		& 0.042		& 0.035		&	& 0.059		& 0.038		& 0.0296\\
$2/3$		& 0.057		& 0.036		& 0.028		&	& 0.055		& 0.031		& 0.0204\\
$1/2$		& \it{0.054}		& 0.031		& 0.021 	&	& \it{0.052}		& 0.025		& 0.0090\\
$1/4$		& 0.054		& \it{0.029}		& 0.019		&	& 0.051		& \it{0.023}		& 0.0023\\
$1/8$		& 0.054		& 0.029 	& \it{0.019} 	&	& 0.051		& 0.023 	& 0.0012\\ 
$1/16$		& 0.055		& 0.030 	& 0.019 	&	& 0.052		& 0.023 	& 0.0005\\ 
% $1/32$		& 0.056		& 0.030 	& 0.019 	&	& 0.053		& 0.024 	& 0.0000\\
\bottomrule
\end{tabular}
\end{table}
\begin{table}
\caption{\label{tab:smooth_it_loc} Example 1: Iteration numbers and runtime in seconds in brackets of the PCG algorithm. The last row contains the number of degrees of freedom of $\WW_h^+$ in million.}
\centering
\begin{tabular}{c c c c c c c c}
\toprule
			&	\multicolumn{3}{c}{$N=9$}		&	&		\multicolumn{3}{c}{$N=11$} \\
				\cmidrule{2-4} 								\cmidrule{6-8} 
$e^{-a\ell}$& $h=0.02$ 	& $h=0.01$	& $h=0.005$	&	& $h=0.02$ & $h=0.01$	& $h=0.005$\\
 \midrule
$15/16$		& 277 (104)	& 305 (518)	& 315 (2926)&	& 316 (184)	& 350 (993)	& 364 (5214)\\
$7/8$		& 195 (75)	& 215 (394)	& 222 (2079)&	& 220 (134)	& 246 (205)	& 256 (3611)\\
$3/4$		& 137 (51)	& 149 (259)	& 154 (1434)&	& 153 (89)	& 172 (478)	& 179 (2475)\\
$2/3$		& 117 (44)	& 127 (224)	& 131 (1203)&	& 132 (75)	& 146 (400)	& 152 (2087)\\
$1/2$		& 90 (35)	& 99 (178)	& 103 (947) &	& 102 (62)	& 115 (324)	& 119 (1675)\\
$1/4$		& 68 (27)	& 73 (124)	& 75 (716)	&	& 76 (47)	& 85 (234)	& 87 (1231)\\
$1/8$		& 56 (23)	& 64 (116) 	& 68 (647) 	&	& 62 (39)	& 73 (210) 	& 79 (1124)\\ 
$1/16$		& 57 (23)	& 70 (126) 	& 77 (725) 	&	& 63 (40)	& 78 (217) 	& 88 (1248)\\ 
\midrule
dofs		&0.51M		& 2.01M		&8.00M		& 	& 0.74M		& 2.94M		& 11.73M\\		
\bottomrule
\end{tabular}
\end{table}
In Table~\ref{tab:smooth_it_loc}, we show the iteration numbers of the PCG algorithm and the corresponding runtimes, which scale almost linearly in the number of degrees of freedom. For our example, we observe that the iteration numbers for different discretizations vary only slightly. To the best of our knowledge, a full analysis of such a preconditioner is, however, not available.
Furthermore, we observe that the preconditioned conjugate gradient algorithm converges faster for moderate to large absorption than for small absorption, which clearly is a favorable feature in the context of our perfectly matched layers approach. 
This convergence behavior can be explained by the exponential decay of the solution in the absorbing but non-scattering layer.
% factor 5 per iteration, where number of dofs increases by 4
% factor 6 per solve, which is partly due to increase of iteration numbers.
\subsection{Example 2: Non-smooth lattice problem}
The potential of the proposed method to solve large-scale problems is briefly illustrated by computational experiments for a lattice problem, which is used as a test case for simulating the core of a nuclear reactor in the nuclear engineering communities. The geometric setup of the problem, the absorption and the scattering parameters are depicted in Figure~\ref{fig:setup_check}. The source is defined as
$q(\r,\s)=1$ for $3\leq \r_1, \r_2\leq 4,\, \s\in\S$ and $q(\r,\s)=0$ else.
Since $\R=(0,7)\times(0,7)$ and $\R^\ell=(-1,8)\times(-1,8)$, we have $\ell=1$.
We again compute a reference solution for $e^{-a\ell}=1/32$ on a reference grid, which is chosen to consist of $332\, 929$ vertices, and $663\,552$ triangles, and spherical harmonics of order up to and including $N=31$, i.e., $1\,024$ Fourier coefficients. 
The number of degrees of freedom for approximating the solution on the reference grid is $515\,488\,240$, which amounts to nearly 4GByte of memory to just store the  even and odd parts of the solution. For the even part we have $165\,132\,784$ degrees of freedom. For this computation we used our MATLAB implementation on a workstation with an Intel Xeon CPU E5-2620 with 2.40GHz and 64 GB of memory. 
\begin{figure}
	\centering
	\includegraphics[width=0.49\textwidth]{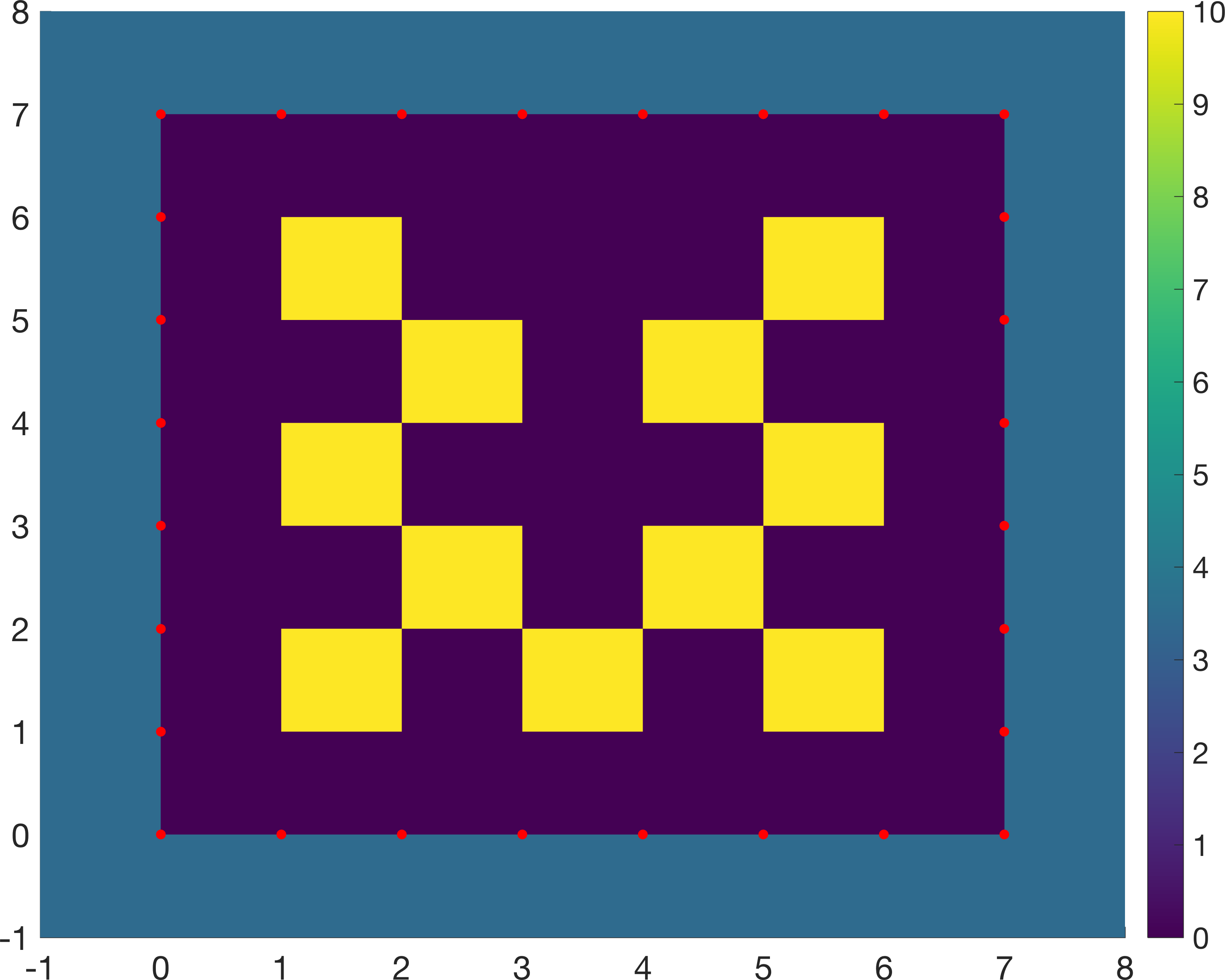}
	\includegraphics[width=0.49\textwidth]{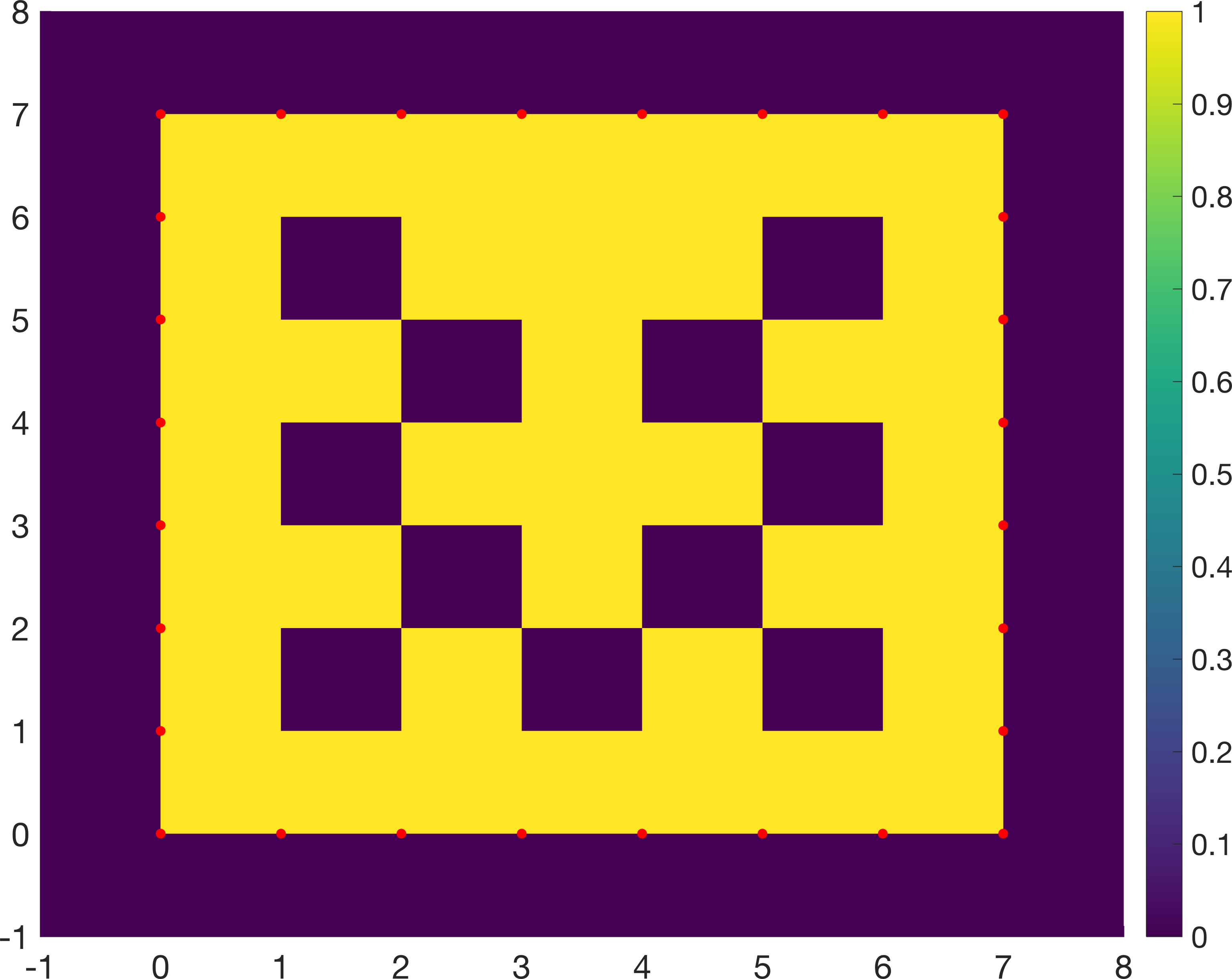}
	\caption{\label{fig:setup_check} Example 2: Sketch of the computational setup. Left: Extended absorption parameter with $e^{-a}=1/32$ on $\R^\ell\setminus\R$. Right: Extended scattering coefficient on $\R^\ell$. The domain $\R$ is enclosed by the dotted line.}
\end{figure}
Table~\ref{tab:check_err_loc_max_N} shows the error for a $P_N$-FEM approximation with $N=31$ for different damping parameters, which can be seen to decay exponentially as predicted by theory.
The PCG iteration numbers decreased from $270$ for $e^{-a\ell}=1/2$ to $226$ for $e^{-a\ell}=1/32$.
% , with computation times of about 13-14 hours for each simulation.
Moreover, despite the fact, that the absorption parameter has a quite strong jump discontinuity, the method does not produce oscillations, see Figure~\ref{fig:check_decay}. 
\begin{table}
	\centering
	\caption{\label{tab:check_err_loc_max_N} Example 2: Error $e_h$ and iteration numbers of the PCG algorithm on the reference grid for different absorption parameters $a$.}
\begin{tabular}{c c c c c c}
	\toprule
	$e^{-a\ell}$	&  $1/2$ & $1/4$ & $1/8$ & $1/16$	& $1/32$\\ %a
	\midrule
$e_h\times 1000$	&   1.463 & 0.681	& 0.346	& 0.141	& 0.00\\ 
\bottomrule
\end{tabular}
\end{table}
\begin{figure}
	\centering
	\includegraphics[width=.75\textwidth]{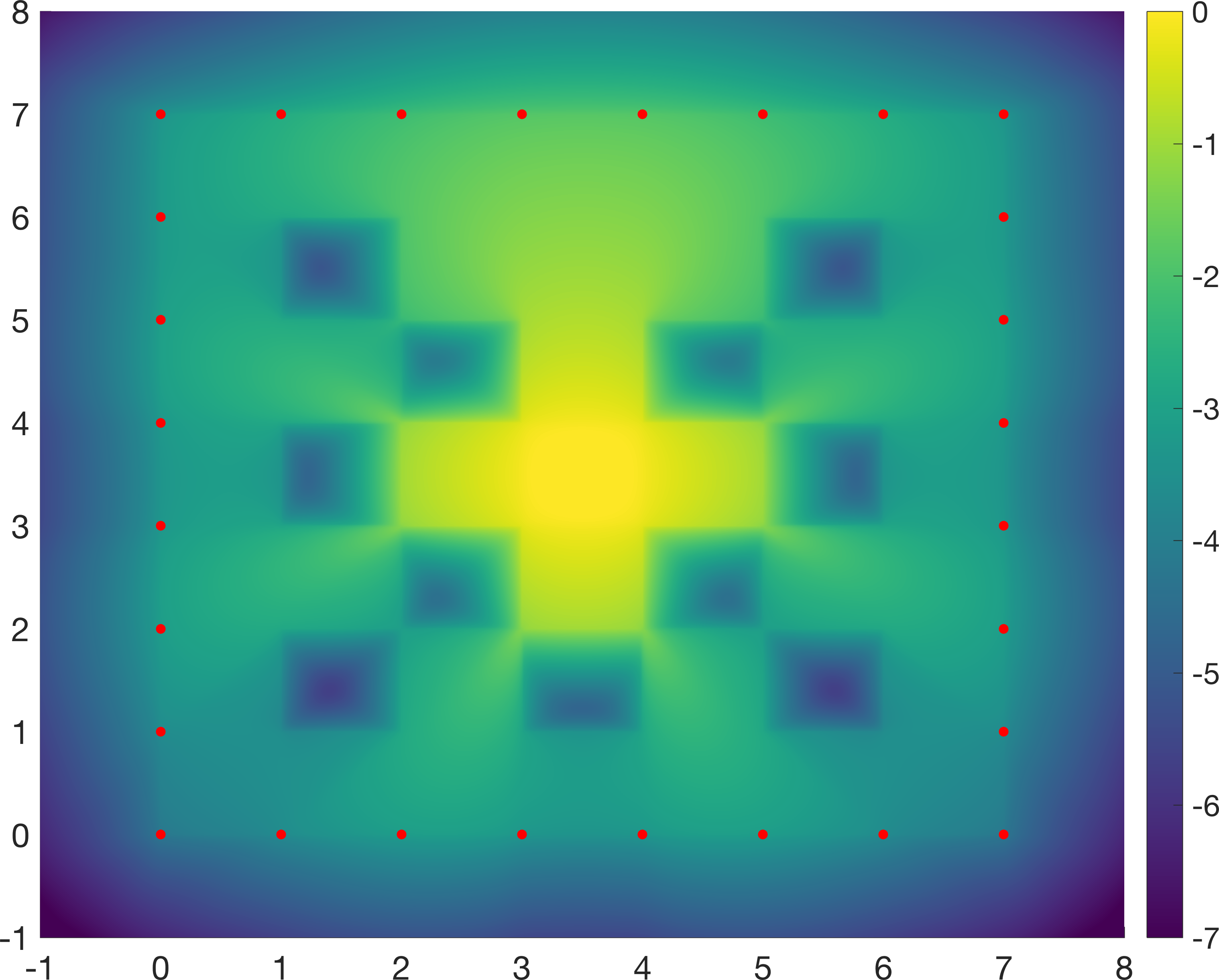}
	\caption{Example 2:  $\log_{10}$-plot of the angular average of the reference solution. 
	\label{fig:check_decay}}
\end{figure}
\section{Discussion and further applications}\label{sec:discussion}
We have presented a perfectly mat\-ched layer approach for the efficient treatment of vacuum boundary conditions in radiative transfer. 
The choice of reflection boundary condition on the boundary of the extended domain was specifically tailored to obtain a mixed variational formulation that leads to sparse linear systems, and which can be implemented easily.
In view of our detailed error estimates, it seems possible to analyze different artificial boundary conditions for the extended problem as well. As mentioned in the introduction, a relevant case are periodic boundary conditions, which in turn can be used to develop pseudospectral methods, see \cite{PowellCoxArridge18}.
Besides different boundary conditions it seems possible to generalize our results to non-constant extensions of the absorption coefficient as long as sufficient decay of the solution within the absorbing layer is guaranteed.
Finally, let us shortly comment on a further possible application of the theoretical setup.
\paragraph{Least-Squares formulations}
A powerful method for the solution of first order equations is the least-squares approach that has been developed for the radiative transfer equation in \cite{ManResSta00}, and has been widely used \cite{GrellaSchwab2011a,WidHiptSchw08}. The basic approach is to minimize the functional
\begin{align*}
	\| \sgrad u + \C u -q\|_{L^2(\D)}^2 + \|u\|_{L^2(\partial\D_-;|\sn|)}^2 \to \min!
\end{align*}
over the space $\W^2=\{v\in W^2(\D): v_{\mid\partial\D_-}\in L^2(\partial\D_-;|\sn|)\}$. Here, the homogeneous inflow boundary conditions are approximated by incorporating the boundary functional $\|u\|_{L^2(\partial\D_-;|\sn|)}$. As mentioned in the introduction, the numerical approximation of such half-space integrals makes the numerical realization of the minimization problem difficult. 
Based on the approach of this paper, it is natural to investigate the following modified least-squares problem
\begin{align*}
	\| \sgrad \bar w^{\ell,a} + \C^{\ell,a} \bar w^{\ell,a} -q^\ell\|_{L^2(\D^\ell)}^2 + \|\bar w^{\ell,a}\|_{L^2(\partial\D^\ell)}^2 \to \min!,
\end{align*}
where the minimum is sought in the space $\W^\ell=\{v\in W^2(\D^\ell): v_{\mid\partial\D^\ell}\in L^2(\partial\D^\ell)\}$. This is currently under investigation by the authors.
\section*{Acknowledgements}
HE and MS acknowledge financial support for a one week research visit of HE at the University of Twente by 4TU Centre of Competence "Fluid and Solid Mechanics".
\bibliographystyle{siamplain} 
\bibliography{absorbing_rte}
\end{document}